
\documentclass{article}
\usepackage{amsmath}
\usepackage{amsfonts}
\usepackage{amsthm}
\usepackage{amssymb}
\usepackage{fullpage}
\usepackage{graphicx}
\usepackage{scalefnt}
\usepackage{subcaption}
\usepackage{tikz-cd}
\setcounter{MaxMatrixCols}{30}
\providecommand{\U}[1]{\protect\rule{.1in}{.1in}}
\newtheorem{theorem}{Theorem}

\newtheorem{definition}[theorem]{Definition}

\newtheorem{lemma}[theorem]{Lemma}

\newtheorem{problem}[theorem]{Problem}

\newtheorem{question}[theorem]{Question}

\allowdisplaybreaks

\begin{document}

\title{Almost Bijective Parametrization of \\ $3 \times 3$ Copositive Matrices}
\author{Hoon Hong and Ezra Nance}
\maketitle

\begin{abstract}
        In this work we take a deep dive into the cone of copositive $3 \times 3$ matrices. In doing so we visualize the cone, make geometric observations about it, and prove them. We then use these observations to parametrize the set. In the process we run into issues with surjectivity, and overcome them by resolving singularities and slightly shifting our original approach. We do all of this to ultimately arrive at a novel parametrization of the $3 \times 3$ copositive matrix cone which is surjective and almost injective, which we call almost bijective.
\end{abstract}

\section{Introduction}

For many years positive semi-definite matrices have been studied, that is, matrices whose associated quadratic form is always non-negative. These objects have been interesting and fruitful to look into. They have applications in not only matrix theory, but also in optimization \cite{vandenberghe1996semidefinite} and differential equations \cite{potter1966matrix}. Furthermore, they can be used to answer the quadratic case of the famous Hilbert's 17th problem. It might seem amazing that these matrices have proven to be so useful, but this really shouldn't come as a surprise. The property of being non-negative for all inputs is a very natural one. After all, we live in a world where negative values don't make much sense for many physical quantities such as mass or time. So in the spirit of avoiding negative quantities, \text{copositive} matrices were introduced.

\begin{definition}[Copositive Matrix]
        An $n \times n$ symmetric matrix $A$ is \emph{copositive} if $x^\intercal A x \geq 0$ for all $x \geq 0$.
\end{definition}

Notice that the non-negativity condition has now been extended to the input of the quadratic form, not only the output. Copositive matrices were first introduced by Motzkin \cite{motzkin1952copositive}, and their applications are quite similar to those of positive semi-definite matrices for many of the same reasons \cite{hadeler1983copositive}. While copositive matrices have been studied much less than positive semi-definite ones, quite a bit of research has been done on the implicitization problem \cite{valiaho1986criteria}. That is, given a symmetric matrix how can one decide if it is copositive or not? While we will use some of these results, our goal instead is parametrization. In other words, how do we generate all of the copositive matrices?

\begin{definition}[Cone of Copositive Matrices]
        Let $\mathcal{CF}_{n} \subset \mathbb{R}^{\frac{n(n+1)}{2}}$ be the set of $n \times n$ copositive matrices.
\end{definition}

Most of the results in this area are focused on parametrizing special subsets of the copositive cone, see \cite{dickinson2013irreducible}, \cite{hildebrand2012extreme}, and \cite{afonin2021extreme}. There are more general results involving the parametrization of a general semialgebraic set like \cite{gonzalez1996parametrization}. However, these results would be difficult for us to implement. Furthermore, these generalized results are based off the Cylindrical Algebraic Decomposition (CAD), which yield piecewise results. In this work we wish to avoid piecewise parametrizations as much as possible.

Similar to the positive semi-definite matrices the set of copositive matrices of a particular size form a closed, convex cone. There is a classic result \cite{diananda1962non} which states that for $n < 5$,
\[
        \mathcal{CF}_n = \left\{A+B : A \text{ is positive semi-definite and }B \text{ has non-negative entries}\right\}.
\]
This is a good parametrization of the set, however, we will demand a bit more. Ideally, we would like to parametrize the set injectively, and we can see that using this method we will have $n(n+1)$ parameters for an $\frac{n(n+1)}{2}$-dimensional set. Therefore, we have no hope of this being injective. This is where we will make progress.

As is often the case in mathematics we will begin by considering a small case. When $n=2$ it doesn't take much work to see that
\[
        \mathcal{CF}_2 = \left\{
                \begin{bmatrix}
                        t_1^2 & -t_1t_2 + \lambda \\
                        -t_1t_2 + \lambda & t_2^2
                \end{bmatrix} :
                t_1,t_2,\lambda \geq 0
        \right\}.
\]
Furthermore, this parametrization is injective. This sets the standard for what we want as $n$ goes up. Naturally, we wish to parametrize $\mathcal{CF}_3$ next which brings us to the main problem in the paper.

\begin{problem}
        Find a parametrization map $\Phi: U \to \mathcal{CF}_3$ such that
        \begin{enumerate}
                \item The domain $U$ is an easily described.
                \item The map $\Phi$ is surjective.
                \item The map $\Phi$ is (almost) injective.
        \end{enumerate}
\end{problem}

\noindent We will address this problem by using the geometry of $\mathcal{CF}_3$ to inform our intuition and progress from there.


\section{Visualizing $\mathcal{CF}_{3}$}

In order the visualize $\mathcal{CF}_3 \subset \mathbb{R}^6$, we will need to take a 3-dimensional slice. However, we would like for this slice to be ``generic" in some sense. When studying copositive matrices it is quite common to scale the diagonal elements of the matrix to be 1 since
\[
        \begin{bmatrix}
                a_{11} & a_{12} & a_{13} \\
                a_{12} & a_{22} & a_{23} \\
                a_{13} & a_{23} & a_{33}
        \end{bmatrix} \in \mathcal{CF}_3
        \iff
        \begin{bmatrix}
                1 & \frac{a_{12}}{\sqrt{a_{11}a_{22}}} & \frac{a_{13}}{\sqrt{a_{11}a_{33}}} \\
                \frac{a_{12}}{\sqrt{a_{11}a_{22}}} & 1 & \frac{a_{23}}{\sqrt{a_{22}a_{33}}} \\
                \frac{a_{13}}{\sqrt{a_{11}a_{33}}} & \frac{a_{23}}{\sqrt{a_{22}a_{33}}} & 1
        \end{bmatrix} \in \mathcal{CF}_3.
\]
This works as long as no $a_{ii} = 0$. In other words, this almost always works. With this property in mind we now will define a particular slice of $\mathcal{CF}_3$.
\begin{definition}
        Let $\widehat{\mathcal{CF}}_{3}$ be the subset of $\mathcal{CF}_{3}$ consisting of matrices whose diagonal elements equal 1, that is,
        \[
        \widehat{\mathcal{CF}}_{3} = \left\{ A \in \mathcal{CF}_3 : A =
                \begin{bmatrix}
                        1 & a_{12} & a_{13} \\
                        a_{12} & 1 & a_{23} \\
                        a_{13} & a_{23} & 1
                \end{bmatrix}
        \right\}.
        \]
\end{definition}
\noindent Note that $\widehat{\mathcal{CF}}_3$ is a 3-dimensional, closed, convex cone. Thus, we can visualize it. In Figure \ref{fig:sampledSet} we can see the result of sampling the set.

\begin{figure}[h!]
        \centering
        \includegraphics[scale=0.4]{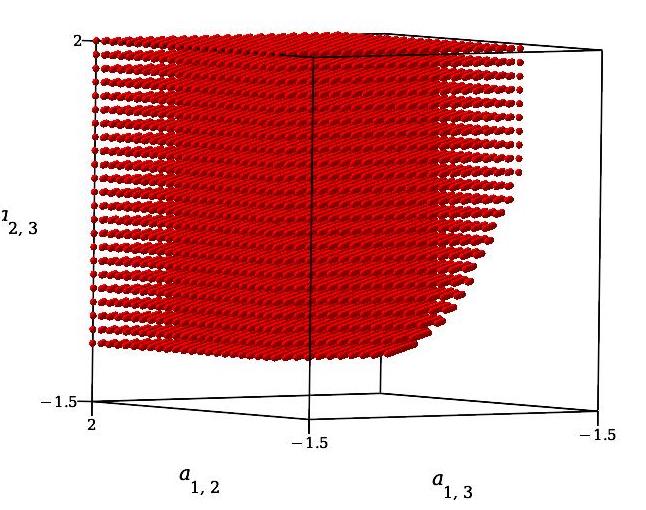}
        \caption{Sampled Points from $\widehat{\mathcal{CF}}_3$}
        \label{fig:sampledSet}
\end{figure}

From visual inspection we can pick out a few notable features:
\begin{enumerate}
        \item The boundary of $\widehat{\mathcal{CF}}_3$ seems to be comprised of 4 discrete pieces: 3 flat pieces and 1 curved piece.
        \item The set appears to be bounded by the cone formed by the inequalities $a_{ij} \geq -1$ for all $i \neq j$.
        \item The curved side looks like it is bounded by the boundary of a 2-dimensional simplex.
\end{enumerate}
These observations are much clearer in Figure \ref{fig:Boundary} where we have color coded the parts of the boundary. The red, blue, and green portions are the flat pieces, and the gray portion is the curved 2-dimensional simplex. It should be noted that the gaps between these pieces are not really there. They are a result of using software to plot the boundary. We can confirm observation 2 in the following lemma.

\begin{figure}[h!]
        \centering
        \includegraphics[scale=0.4]{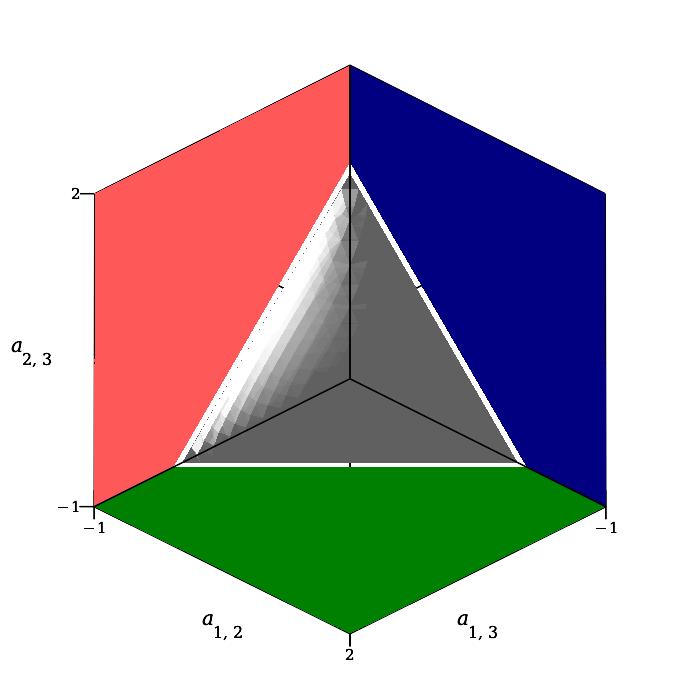}
        \caption{Pieces of the boundary of $\widehat{\mathcal{CF}}_3$}
        \label{fig:Boundary}
\end{figure}

\begin{lemma}\label{lem:bound_on_nondiagonal}
        Let $A \in \widehat{\mathcal{CF}}_3$. Then $a_{12},a_{13},a_{23} \geq -1$.
\end{lemma}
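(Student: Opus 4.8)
The plan is to exploit the defining property of copositivity directly by choosing test vectors $x \geq 0$ that isolate the off-diagonal entry in question. The key observation is that copositivity requires $x^\intercal A x \geq 0$ for \emph{all} nonnegative $x$, so I am free to plug in any convenient nonnegative vector and read off a necessary inequality on the entries.

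First I would establish the bound $a_{12} \geq -1$ and note that the other two cases are entirely symmetric (by permuting indices, i.e.\ conjugating by a permutation matrix, which preserves both copositivity and the unit diagonal). To bound $a_{12}$, I would test $A$ against the vector $x = (1,1,0)^\intercal$, which is clearly nonnegative. Computing the quadratic form gives
\[
        x^\intercal A x = a_{11} + a_{22} + 2a_{12} = 1 + 1 + 2a_{12} = 2 + 2a_{12},
\]
using that the diagonal entries equal $1$ by the definition of $\widehat{\mathcal{CF}}_3$. Copositivity forces $2 + 2a_{12} \geq 0$, hence $a_{12} \geq -1$. Applying the analogous test vectors $(1,0,1)^\intercal$ and $(0,1,1)^\intercal$ yields $a_{13} \geq -1$ and $a_{23} \geq -1$ respectively.

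I do not expect any serious obstacle here: the statement is a direct consequence of evaluating the quadratic form at three explicit lattice points, and the unit-diagonal normalization makes the arithmetic trivial. The only thing worth stating carefully is that the third coordinate of the test vector is set to $0$ precisely so that the entries involving index $3$ (namely $a_{13}$, $a_{23}$, and $a_{33}$) drop out, reducing the computation to the relevant $2 \times 2$ principal submatrix. In effect this lemma is the three-dimensional restatement of the elementary fact, visible already in the $\mathcal{CF}_2$ parametrization, that a unit-diagonal copositive $2 \times 2$ matrix has off-diagonal entry at least $-1$.
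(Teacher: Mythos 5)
Your proof is correct and uses essentially the same idea as the paper: both evaluate the quadratic form at the test vector $(1,1,0)^\intercal$ (and its permutations), the only difference being that the paper phrases the argument as a contradiction with $a_{12} = -1 - \varepsilon$ while you read off the inequality $2 + 2a_{12} \geq 0$ directly. Your direct version is if anything slightly cleaner.
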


\begin{proof}
        Let $A \in \widehat{\mathcal{CF}}_3$. By way of contradiction assume that $a_{12} < -1$. Thus, $a_{12} = -1 - \varepsilon$ for some $\varepsilon > 0$. Then the associated quadratic form is
        \begin{align*}
                x_1^2 + x_2^2 + x_3^2 + 2a_{12}x_1x_2 + 2a_{13}x_1x_3 + 2a_{23}x_2x_3 &= 
                x_1^2 + x_2^2 + x_3^2 + -2(1+\varepsilon)x_1x_2 + 2a_{13}x_1x_3 + 2a_{23}x_2x_3 \\
                &= (x_1-x_2)^2  + x_3^2 - 2\varepsilon x_1x_2 + 2a_{13}x_1x_3 + 2a_{23}x_2x_3.
        \end{align*}
        Evaluating this polynomial at $x=\left(1,1,0\right)$ gives us $-2\varepsilon < 0$. Therefore, $A \notin \widehat{\mathcal{CF}}_3$, which is a contradiction. By symmetry we also have that $a_{13} \geq -1$ and $a_{23} \geq -1$.
\end{proof}

Lemma \ref{lem:bound_on_nondiagonal} as well as looking at a picture like Figure \ref{fig:Boundary} suggests a method for parametrizing this set. We can take a portion of $\widehat{\mathcal{CF}}_3$ and project it out from the point $(-1,-1,-1)$. From visual inspection it appears that if we take the curved portion of the boundary and project it out, we should be able to recover all of $\widehat{\mathcal{CF}}_3$, see Figure \ref{fig:BoundaryProjection}.

\begin{figure}[h!]
        \centering
        \includegraphics[scale=0.4]{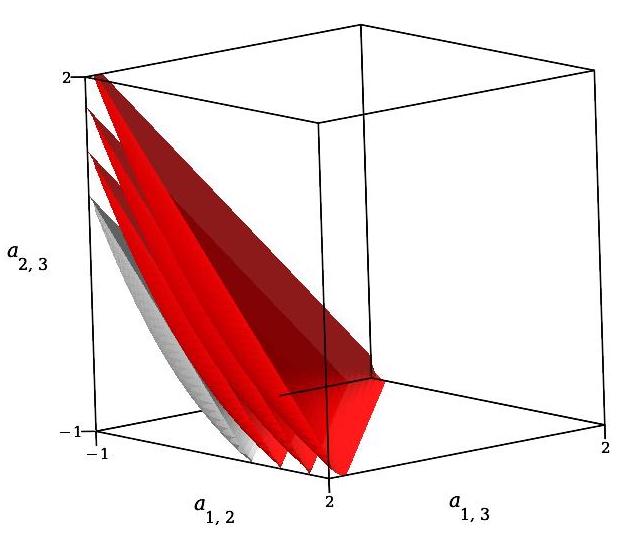}
        \caption{Projection of the curved boundary of $\widehat{\mathcal{CF}}_3$}
        \label{fig:BoundaryProjection}
\end{figure}

\noindent Thus, we have our plan of action. First we will parametrize this curved portion of the boundary of $\widehat{\mathcal{CF}}_3$. Next we will project it out from the point $(-1,-1,-1)$ to get the entirety of $\widehat{\mathcal{CF}}_3$. Finally we will scale the diagonal elements to ``glue" the slices together and form $\mathcal{CF}_3$.

\section{Parametrizing the Curved Boundary of $\widehat{\mathcal{CF}}_3$}

Note that for a matrix to be on the boundary of $\widehat{\mathcal{CF}}_3$, its quadratic form must evaluate to 0 on some non-negative vector. With that in mind, we will begin by trying to figure out what the defining property is for being in this curved boundary. Looking at this set we can pull out a couple of examples.
\[A =
        \begin{bmatrix}
                1 & -1 & -1 \\
                -1 & 1 & 1 \\
                -1 & 1 & 1
        \end{bmatrix}
        \hspace{2em}
        B = 
        \begin{bmatrix}
                1 & -1/2 & -1/2 \\
                -1/2 & 1 & -1/2 \\
                -1/2 & -1/2 & 1
        \end{bmatrix}
\]
We will contrast these with a matrix which is on the boundary but not the curved portion.
\[C =
        \begin{bmatrix}
                1 & -1 & 1 \\
                -1 & 1 & 1 \\
                1 & 1 & 1
        \end{bmatrix}
\]
One can check that all three of these matrices are on the boundary of $\widehat{\mathcal{CF}}_3$, but one difference is that $|A| = |B| = 0$, whereas $|C| \neq 0$. However, being copositive and singular isn't quite enough. Note that the matrix
\[
        \begin{bmatrix}
                1 & 1/2 & 1/2 \\
                1/2 & 1 & 1/2 \\
                1/2 & 1/2 & 1
        \end{bmatrix}
\]
is singular and copositive but is in the interior of $\widehat{\mathcal{CF}}_3$. Analyzing these matrices a little more we can see that not only are $A$ and $B$ singular, but they have a null vector which has non-negative components. This gives us the following lemma.

\begin{lemma}\label{lem:curvedBoundary}
        Let $A \in \partial \widehat{\mathcal{CF}}_3$ with $a_{12} + a_{13} + a_{23} \leq -1$. Then $A$ is singular and has a null vector with non-negative components.
\end{lemma}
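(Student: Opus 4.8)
The plan is to start from the boundary characterization noted just before the lemma: since $A \in \partial\widehat{\mathcal{CF}}_3$, the matrix $A$ is copositive and its quadratic form $q(y) = y^\intercal A y$ vanishes at some nonzero $x \geq 0$. Because $q \geq 0$ on the entire nonnegative orthant, such an $x$ is a global minimizer of $q$ over the orthant with minimum value $0$. The proof then splits according to how many coordinates of $x$ are positive, and in each branch I would use first-order optimality to extract a null vector.

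First I would dispose of the trivial configurations. A coordinate vector cannot be the witness, since $q(e_i) = a_{ii} = 1 \neq 0$; hence $x$ has at least two positive entries. If $x$ is strictly positive, then it is an interior minimizer of the smooth function $q$ over the open orthant, so $\nabla q(x) = 2Ax = 0$. This already gives $Ax = 0$, so $A$ is singular and $x$ itself is the desired nonnegative (in fact positive) null vector; notably, the hypothesis on the sum is not even needed in this branch.

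The remaining and genuinely delicate case is when $x$ has exactly one zero coordinate, say $x = (x_1,x_2,0)$ with $x_1,x_2 > 0$. By permuting indices, which preserves copositivity, boundary membership, and the symmetric sum $a_{12}+a_{13}+a_{23}$, I may assume the zero sits in the third slot. Restricting $q$ to the face $\{y_3 = 0\}$ gives the copositive two-variable form $y_1^2 + 2a_{12}y_1 y_2 + y_2^2$, and the same interior first-order argument forces $a_{12}^2 = 1$ and $x_1 = -a_{12}x_2$; since $x_1,x_2>0$ this yields $a_{12} = -1$ and $x_1 = x_2$. This is where the hypothesis enters: testing copositivity of $A$ on the vectors $(t,t,s)$ with $t,s\geq 0$ forces $a_{13}+a_{23}\geq 0$, whereas $a_{12}+a_{13}+a_{23}\leq -1$ together with $a_{12}=-1$ forces $a_{13}+a_{23}\leq 0$. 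Hence $a_{13}+a_{23}=0$, from which $\det A = 1 - a_{12}^2 - a_{13}^2 - a_{23}^2 + 2a_{12}a_{13}a_{23} = -(a_{13}+a_{23})^2 = 0$, and a direct check shows $(1,1,0)^\intercal$ is a nonnegative null vector.

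I expect the main obstacle to be precisely this last case, where the witness $x$ lies on a proper face of the orthant: the unconstrained gradient argument breaks down, and $A$ need not be singular in general (for instance $a_{12}=-1$, $a_{13}=a_{23}=\tfrac12$ is copositive and on the boundary, yet nonsingular). The hypothesis $a_{12}+a_{13}+a_{23}\leq -1$ is exactly the condition that excludes such matrices and collapses this boundary case back to singularity. Making the sign bookkeeping in that step airtight, and confirming that the permutation reduction is legitimate, is where the real care is required.
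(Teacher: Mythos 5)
Your proposal is correct and follows essentially the same route as the paper: take a nonnegative boundary witness of $y^\intercal A y = 0$, split on whether it lies in the open orthant or on a face, use a first-order/perturbation argument at the interior minimum to get $Ay=0$ in the first case, and in the face case pin down the relevant off-diagonal entry to $-1$ and then combine the hypothesis $a_{12}+a_{13}+a_{23}\leq -1$ with a copositivity test vector to force the remaining sum to vanish. The only differences are cosmetic (stationarity of the restricted form in place of the paper's completed square plus Lemma \ref{lem:bound_on_nondiagonal}, and an explicit but redundant determinant check), so the argument is sound as written.
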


\begin{proof}
        Since $A \in \partial\widehat{\mathcal{CF}}_3$, we have $y^\intercal A y = 0$ for some $y \in \mathbb{R}_{\geq 0}\setminus \{0\}$. First we will consider what happens when a component of $y$ is zero. So without loss of generality assume that $y_1 = 0$. Then we have
        \begin{align*}
                y^\intercal A y &= y_2^2 + 2a_{23}y_2y_3 + y_3^2 \\
                &= (y_2-y_3)^2 + 2(a_{23} + 1)y_2y_3.
        \end{align*}
        Note that if $y_2$ or $y_3$ is zero, then we get a contradiction with $y = 0$. Thus, $y_2$ and $y_3$ are both non-zero, and we can solve for $a_{23}$.
        \[
                a_{23} = -\frac{(y_2-y_3)^2}{2y_2y_3} - 1
        \]
        From this we can see that $a_{23} \leq -1$, but from Lemma \ref{lem:bound_on_nondiagonal} we see that $a_{23} \geq -1$. Hence, $a_{23}$ must equal $-1$. To achieve this we must have have $y_2 = y_3 = t > 0$. Furthermore, the condition $a_{12} + a_{13} + a_{23} \leq -1$ becomes the condition $a_{12} + a_{13} \leq 0$. With these in mind we arrive at
        \[
                Ay = \begin{bmatrix}
                        a_{12}y_2 + a_{13}y_3 \\
                        y_2 - y_3 \\
                        -y_2 + y_3
                \end{bmatrix} = 
                t\begin{bmatrix}
                        a_{12} + a_{13} \\
                        0 \\
                        0
                \end{bmatrix}.
        \]
        From here we will rule out the possibility of $a_{12} + a_{13} < 0$. To see this we will consider the quadratic form on a general $x$ and get
        \[
                x_1^2 + (x_2-x_3)^2 + 2a_{12}x_1x_2 + 2a_{13}x_1x_3.
        \]
        Evaluating this on $x = \left( 1,-\frac{1}{a_{12}+a_{13}},-\frac{1}{a_{12}+a_{13}}\right)$ we get the value $1-2 = -1 < 0$. This contradicts $A$ being copositive, and we finish the first case. When $y_1 = 0$, $A$ is singular with a null vector of $(0,t,t) \in \mathbb{R}_{\geq 0 } \setminus \{0\}$. We will have symmetric arguments if a different component of $y$ is zero.
        
        For the next case assume that $y \in \mathbb{R}_{>0}$. Here we will show that $y^\intercal A y = 0$ implies that $Ay = 0$. To proceed we will assume that $y^\intercal A y = 0$ and $Ay \neq 0$, and we will arrive at a contradiction. Since $Ay \neq 0$ there exists some $i \in \{1,2,3\}$ such that the $i$th component of the vector $(Ay)$ is  non-zero. Here we have to split into two cases.
        
        Assume that $(Ay)_i > 0$. Let $\varepsilon \in \mathbb{R}$ be such that $0 < \varepsilon < \min\{2(Ay)_i, y_i\}$. With this setup we have two valuable properties. We have $\varepsilon - 2(Ay)_i < 0$, which implies $\varepsilon^2 - 2\varepsilon(Ay)_i < 0$. Also, we have $y - \varepsilon e_i \in \mathbb{R}_{>0}$. With these in mind we have
        \begin{align*}
                (y-\varepsilon e_i)^\intercal A (y-\varepsilon e_i) &= y^\intercal A y - \varepsilon e_i^\intercal A y - \varepsilon y^\intercal A e_i + \varepsilon^2 e_i^\intercal A e_i \\
                &= -\varepsilon (Ay)_i - \varepsilon (Ay)_i + \varepsilon^2 \\
                &< 0.
        \end{align*}
        This contradicts $A$ being copositive.
        
        For the last case assume that $(Ay)_i < 0$. Let $\varepsilon \in \mathbb{R}$ be such that $0 < \varepsilon < \min\{-2(Ay)_i, y_i\}$. Similar to the last case now we have $\varepsilon^2 + 2\varepsilon(Ay)_i < 0$ and $y + \varepsilon e_i \in \mathbb{R}_{>0}$. Now we calculate the quadratic form
        \begin{align*}
                (y+\varepsilon e_i)^\intercal A (y+\varepsilon e_i) &= y^\intercal A y + \varepsilon e_i^\intercal A y + \varepsilon y^\intercal A e_i + \varepsilon^2 e_i^\intercal A e_i \\
                &= \varepsilon (Ay)_i + \varepsilon (Ay)_i + \varepsilon^2 \\
                &< 0,
        \end{align*}
        and we arrive at our contradiction.
\end{proof}

This leads us to an idea. Since all of the matrices on this curved surface have a non-negative null vector, we can use the coordinates of that null vector as our parameters. If we assume that $p \geq 0$ is a null vector, then we have the following system.
\[
        \begin{bmatrix}
                1 & a_{12} & a_{13} \\
                a_{12} & 1 & a_{23} \\
                a_{13} & a_{23} & 1
        \end{bmatrix}
        \begin{bmatrix}
                p_1 \\ p_2 \\ p_3
        \end{bmatrix} =
        \begin{bmatrix}
                0 \\ 0 \\ 0
        \end{bmatrix}
\]
Luckily, we can uniquely solve this system for $a_{ij}$ as long as $p > 0$. This leads to the following lemma.

\begin{lemma}\label{lem:singular_coefficients}
        Let $p \in \mathbb{R}^3_{>0}$ and $A$ be a $3 \times 3$ real symmetric matrix with 1's along the diagonal. Then $Ap = 0$ if and only if
        \[
        a_{12} = \frac{p_3^2 - (p_1^2 + p_2^2)}{2p_1p_2},
        \hspace{2em}
        a_{13} = \frac{p_2^2 - (p_1^2 + p_3^2)}{2p_1p_3},
        \hspace{2em}
        a_{23} = \frac{p_1^2 - (p_2^2 + p_3^2)}{2p_2p_3}.
        \]
\end{lemma}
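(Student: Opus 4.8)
The plan is to turn the matrix equation $Ap = 0$ into an explicit $3 \times 3$ linear system in the unknown off-diagonal entries $a_{12}, a_{13}, a_{23}$, and then to observe that the positivity hypothesis $p > 0$ is exactly what forces this system to have a unique solution. Writing $Ap = 0$ row by row and using that the diagonal entries are all $1$, I would first extract
\[
\begin{aligned}
        a_{12}p_2 + a_{13}p_3 &= -p_1, \\
        a_{12}p_1 + a_{23}p_3 &= -p_2, \\
        a_{13}p_1 + a_{23}p_2 &= -p_3.
\end{aligned}
\]
This is a linear system $M\,(a_{12},a_{13},a_{23})^\intercal = (-p_1,-p_2,-p_3)^\intercal$ with coefficient matrix
\[
        M = \begin{bmatrix} p_2 & p_3 & 0 \\ p_1 & 0 & p_3 \\ 0 & p_1 & p_2 \end{bmatrix}.
\]
Since $Ap = 0$ holds if and only if this system holds, the entire lemma reduces to describing the solution set of the system.

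The key step is the determinant computation $\det M = -2p_1p_2p_3$, which is nonzero precisely because $p \in \mathbb{R}^3_{>0}$. Hence $M$ is invertible and the system has a \emph{unique} solution, so it suffices to produce that solution. The cleanest route is Cramer's rule: replacing the appropriate column of $M$ by $(-p_1,-p_2,-p_3)^\intercal$ and taking determinants yields, for instance, $\det M_1 = p_3(p_1^2 + p_2^2 - p_3^2)$, so that $a_{12} = \det M_1/\det M = \frac{p_3^2 - (p_1^2 + p_2^2)}{2p_1p_2}$, and symmetrically for $a_{13}$ and $a_{23}$. Because the solution is unique, this simultaneously establishes both directions of the ``if and only if'': the formulas are forced by $Ap = 0$, and conversely any $A$ with these entries satisfies $Ap = 0$. (Alternatively, one may simply substitute the stated formulas back into the three equations and check that each reduces to a polynomial identity in $p_1, p_2, p_3$ after clearing denominators; the uniqueness from $\det M \neq 0$ then upgrades this verification into the full biconditional.)

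I do not expect a serious obstacle here, as the argument is a finite linear-algebra computation; the only point requiring genuine care is pinning down where the hypothesis $p > 0$ is actually used. It enters in two distinct ways: it guarantees $\det M \neq 0$, which is what makes the solution unique and hence makes the statement a true equivalence rather than a single implication, and it guarantees that the denominators $2p_1p_2$, $2p_1p_3$, $2p_2p_3$ in the stated formulas are nonzero, so that the expressions are well-defined to begin with. I would therefore flag these two uses of positivity explicitly, since dropping $p > 0$ would invalidate both the uniqueness and the well-definedness of the formulas.
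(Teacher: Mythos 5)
Your proposal is correct and follows essentially the same route as the paper: both reduce $Ap=0$ to the linear system $M(a_{12},a_{13},a_{23})^\intercal=-(p_1,p_2,p_3)^\intercal$ with the same coefficient matrix and invoke invertibility from $p>0$ to get the unique solution. You are in fact slightly more explicit than the paper, which asserts the unique solution without recording $\det M=-2p_1p_2p_3$ or the Cramer's-rule computation.
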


\begin{proof}
        We have
        \[
        A =
        \begin{bmatrix}
                1 & a_{12} & a_{13} \\
                a_{12} & 1 & a_{23} \\
                a_{13} & a_{23} & 1
        \end{bmatrix}.
        \]
        Note that $Ap = 0$ if and only if
        \[Ap = 
        \begin{bmatrix}
                p_1 + a_{12}p_2 + a_{13}p_3  \\
                a_{12}p_1 + p_2 + a_{23}p_3 \\
                a_{13}p_1 + a_{23}p_2 + p_3
        \end{bmatrix} = 
        \begin{bmatrix}
                0 \\ 0 \\ 0
        \end{bmatrix}.
        \]
        Thus, we have the system
        \[
        \begin{bmatrix}
                p_2 & p_3 & 0 \\
                p_1 & 0 & p_3 \\
                0 & p_1 & p_2
        \end{bmatrix}
        \begin{bmatrix}
                a_{12} \\ a_{13} \\ a_{23}
        \end{bmatrix} = - 
        \begin{bmatrix}
                p_1 \\ p_2 \\ p_3
        \end{bmatrix}.
        \]
        Since $p \in \mathbb{R}^3_{>0}$ the system has a unique solution, which is
        \[
        a_{12} = \frac{p_3^2 - (p_1^2 + p_2^2)}{2p_1p_2},
        \hspace{2em}
        a_{13} = \frac{p_2^2 - (p_1^2 + p_3^2)}{2p_1p_3},
        \hspace{2em}
        a_{23} = \frac{p_1^2 - (p_2^2 + p_3^2)}{2p_2p_3}.
        \]
\end{proof}

Given Lemma \ref{lem:singular_coefficients} we have a parametrization for $3 \times 3$ real symmetric matrices with 1's along the diagonal with a positive null vector:
\[
        \begin{bmatrix}
                1 & \frac{p_3^2 - (p_1^2 + p_2^2)}{2p_1p_2} & \frac{p_2^2 - (p_1^2 + p_3^2)}{2p_1p_3}\\
                \frac{p_3^2 - (p_1^2 + p_2^2)}{2p_1p_2} & 1 &  \frac{p_1^2 - (p_2^2 + p_3^2)}{2p_2p_3}\\
                \frac{p_2^2 - (p_1^2 + p_3^2)}{2p_1p_3} & \frac{p_1^2 - (p_2^2 + p_3^2)}{2p_2p_3} & 1
        \end{bmatrix}.
\]
However, it's possible that not all of these matrices are copositive. In order to check this, we need a sufficient condition for copositivity. Luckily, we don't need a sufficient condition for a general $3 \times 3$ to be copositive, but we only have to worry about singular matrices with a positive null vector. Under these constraints we have a nice way to check copositivity.

\begin{lemma}\label{lem:singular_copositive_iff_psd}
        Let $A$ be a real symmetric matrix such that $Ap = 0$ for some $p \in \mathbb{R}^n_{>0}$. Then $A \in \mathcal{CF}_{n}$ if and only if $A$ is positive semi-definite.
\end{lemma}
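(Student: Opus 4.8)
The plan is to prove the two implications separately, with the ``only if'' direction being the substantive one. The reverse implication is immediate: if $A$ is positive semi-definite then $x^\intercal A x \geq 0$ for \emph{all} $x \in \mathbb{R}^n$, hence in particular for all $x \geq 0$, so $A \in \mathcal{CF}_n$ directly from the definition of copositivity. No use of the null vector $p$ is needed here.

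For the forward direction, suppose $A$ is copositive and $Ap = 0$ with $p \in \mathbb{R}^n_{>0}$; I want to upgrade the inequality $x^\intercal A x \geq 0$ from the nonnegative orthant to all of $\mathbb{R}^n$. The central idea is that the strictly positive null vector lets me translate an arbitrary real vector into the nonnegative orthant \emph{without changing the value of the quadratic form}. Concretely, fix any $y \in \mathbb{R}^n$. Since every component of $p$ is strictly positive, I can choose a scalar $t > 0$ large enough that $y + tp \geq 0$ (componentwise, take $t \geq \max_i (-y_i/p_i)$). Then $y + tp$ lies in the nonnegative orthant, so copositivity applies to it.

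The key computation is that $Ap = 0$ makes the form invariant under the shift by $tp$. Expanding,
\[
        (y + tp)^\intercal A (y + tp) = y^\intercal A y + 2t\,(Ap)^\intercal y + t^2\,p^\intercal A p = y^\intercal A y,
\]
where I have used $Ap = 0$ (and the symmetry of $A$) to kill both the cross term and the $t^2$ term. Since $y + tp \geq 0$ and $A$ is copositive, the left-hand side is nonnegative, and therefore $y^\intercal A y \geq 0$. As $y$ was arbitrary, $A$ is positive semi-definite.

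There is no serious obstacle in this argument; the only point requiring care is the existence of the shift, which is exactly where the hypothesis $p \in \mathbb{R}^n_{>0}$ (rather than merely $p \geq 0$) is essential. If $p$ had a zero coordinate, one could not guarantee $y + tp \geq 0$ for an arbitrary $y$, and the conclusion would fail. Thus the ``main idea'' is recognizing that a \emph{strictly} positive null vector is precisely what bridges copositivity and positive semi-definiteness for singular matrices.
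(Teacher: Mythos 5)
Your proof is correct and rests on the same idea as the paper's: using the strictly positive null vector to move between the nonnegative orthant and all of $\mathbb{R}^n$ while $Ap = 0$ annihilates the cross terms in the quadratic form. The only cosmetic difference is that you argue directly by adding a large multiple $tp$ to an arbitrary $y$, whereas the paper argues contrapositively by adding a small multiple $\varepsilon q$ of a bad vector to $p$; these are the same computation read in opposite directions.
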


\begin{proof}
        If $A$ is positive semi-definite, then $x^\intercal A x \geq 0$. Thus, $A \in \mathcal{CF}_{n}$. For the other direction assume that $A$ is not positive semi-definite. Then there exists some $q \in \mathbb{R}^3$ such that $q^\intercal A q < 0$. Note that there exists some $\varepsilon > 0$ such that $p + \varepsilon q \in \mathbb{R}^3_{> 0}$. Then we have
        \begin{align*}
                (p + \varepsilon q)^\intercal A (p + \varepsilon q) &= p^\intercal A p + \varepsilon p^\intercal A q + \varepsilon q^\intercal A p + \varepsilon^2 q^\intercal A q \\
                &= \varepsilon^2 q^\intercal A q \\
                &< 0.
        \end{align*}
        Hence $A$ is not copositive.
\end{proof}

So in this special case checking copositivity is the same as checking if the matrix is positive semi-definite. Thus, we can check all of the principal minors. Note that by construction the determinant of the whole matrix is 0, and all of the $1 \times 1$ principal minors are 1. Therefore, we really only need to check the $2 \times 2$ principal minors. Doing so gives us the following.
\begin{align*}
        \begin{vmatrix}
                1 & \frac{p_3^2 - (p_1^2 + p_2^2)}{2p_1p_2} \\
                \frac{p_3^2 - (p_1^2 + p_2^2)}{2p_1p_2} & 1
        \end{vmatrix} &=
        -\frac{1}{4p_1^2p_2^2}(p_1+p_2+p_3)(p_1-p_2+p_3)(p_1+p_2-p_3)(p_1-p_2-p_3) \\
        \begin{vmatrix}
                1 & \frac{p_2^2 - (p_1^2 + p_3^2)}{2p_1p_3} \\
                \frac{p_2^2 - (p_1^2 + p_3^2)}{2p_1p_3} & 1
        \end{vmatrix} &=
        -\frac{1}{4p_1^2p_3^2}(p_1+p_2+p_3)(p_1-p_2+p_3)(p_1+p_2-p_3)(p_1-p_2-p_3) \\
        \begin{vmatrix}
                1 & \frac{p_1^2 - (p_2^2 + p_3^2)}{2p_2p_3} \\
                \frac{p_1^2 - (p_2^2 + p_3^2)}{2p_2p_3} & 1
        \end{vmatrix} &=
        -\frac{1}{4p_2^2p_3^2}(p_1+p_2+p_3)(p_1-p_2+p_3)(p_1+p_2-p_3)(p_1-p_2-p_3)
\end{align*}
The sign of each of these minors is determined by signs of the factors $(p_1+p_2+p_3),$ $(p_1-p_2+p_3),$ $(p_1+p_2-p_3),$ and $(p_1-p_2-p_3)$. Note that $(p_1+p_2+p_3) > 0$ for all $p > 0$. So we arrive at a result. Namely, the singular matrices we have parametrized are copositive if and only if $(p_1-p_2+p_3)(p_1+p_2-p_3)(p_1-p_2-p_3) \leq 0$. This is what we must answer now, which leads into the next lemma.

\begin{lemma}\label{lem:coefficients_cone}
        Let $A$ be a $3 \times 3$ real symmetric matrix with 1's along the diagonal, and let $p$ be a positive null vector of $A$. Then $A \in \widehat{\mathcal{CF}}_{3}$ if and only if 
        \[p \in \operatorname{cone}\left\{e_1+e_2,e_1+e_3,e_2 + e_3  \right\}\setminus \left(\operatorname{span}\{e_1+e_2\} \cup \operatorname{span}\{e_1+e_3\} \cup \operatorname{span}\{e_2+e_3\}\right).\]
\end{lemma}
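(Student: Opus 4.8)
The plan is to convert the copositivity condition---which the discussion preceding the lemma has already reduced to a sign condition on a product---into the stated geometric membership condition on $p$. Since $p$ is a strictly positive null vector of $A$, Lemma~\ref{lem:singular_copositive_iff_psd} tells us that $A \in \widehat{\mathcal{CF}}_3$ exactly when $A$ is positive semi-definite, and the principal-minor computation just above the lemma shows this is equivalent to
\[
(p_1 - p_2 + p_3)(p_1 + p_2 - p_3)(p_1 - p_2 - p_3) \leq 0 .
\]
So I only need to show that, for $p > 0$, this inequality holds if and only if $p$ lies in the given cone minus the three spans.

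The first step is to set $u = p_1 + p_2 - p_3$, $v = p_1 - p_2 + p_3$, and $w = -p_1 + p_2 + p_3$, and to note that the left-hand side of the displayed inequality equals $-uvw$ (since $p_1 - p_2 - p_3 = -w$). Thus copositivity becomes the single condition $uvw \geq 0$. Next I characterize cone membership directly: writing $p = \alpha(e_1+e_2) + \beta(e_1+e_3) + \gamma(e_2+e_3)$ and solving the resulting linear system yields $\alpha = u/2$, $\beta = v/2$, $\gamma = w/2$, so that $p \in \operatorname{cone}\{e_1+e_2, e_1+e_3, e_2+e_3\}$ if and only if $u, v, w \geq 0$.

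The crux of the argument is to upgrade the weak condition $uvw \geq 0$ to the strong condition $u,v,w \geq 0$, and this is exactly where strict positivity of $p$ enters. Observe that $u + v = 2p_1 > 0$, $u + w = 2p_2 > 0$, and $v + w = 2p_3 > 0$. These pairwise sums force at most one of $u,v,w$ to be negative, and moreover whenever one of them is negative the other two must be strictly positive (for instance $u < 0$ together with $u + v > 0$ forces $v > 0$). I will then argue by cases: if $uvw \geq 0$ but some factor were negative, the remaining two would be strictly positive, forcing $uvw < 0$, a contradiction. Hence $uvw \geq 0$ is equivalent to $u,v,w \geq 0$, i.e.\ to cone membership. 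This sign bookkeeping is the main obstacle, since for a $p$ that is not strictly positive the equivalence genuinely fails.

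Finally I dispatch the excised spans. Each $\operatorname{span}\{e_i + e_j\}$ consists of vectors whose remaining coordinate vanishes, so no strictly positive $p$ can lie in any of them; removing the three spans therefore does not affect membership for $p > 0$. Combining the equivalences yields
\[
A \in \widehat{\mathcal{CF}}_3 \iff p \in \operatorname{cone}\{e_1+e_2, e_1+e_3, e_2+e_3\} \setminus \left(\operatorname{span}\{e_1+e_2\} \cup \operatorname{span}\{e_1+e_3\} \cup \operatorname{span}\{e_2+e_3\}\right),
\]
as claimed.
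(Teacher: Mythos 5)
Your proposal is correct and follows essentially the same route as the paper: both reduce copositivity to the sign condition $(p_1-p_2+p_3)(p_1+p_2-p_3)(p_1-p_2-p_3)\leq 0$ via Lemma~\ref{lem:singular_copositive_iff_psd} and the principal-minor computation, and both rule out negative factors by observing that pairwise sums of the factors equal $2p_i>0$. Your substitution $u,v,w$ (which are exactly twice the cone coefficients) packages the paper's explicit five-case sign enumeration a bit more cleanly, but the underlying argument is the same.
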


\begin{proof}
        From the discussion above we know that $A \in \widehat{\mathcal{CF}}_3$ if and only if
        \[
                (p_1-p_2+p_3)(p_1+p_2-p_3)(p_1-p_2-p_3) \leq 0.
        \]
        So we must consider the signs of the following expressions.
        \begin{align}
                (p_1-p_2-p_3)\label{factor1} \\
                (p_1-p_2+p_3)\label{factor2} \\
                (p_1+p_2-p_3)\label{factor3}
        \end{align}
        We now have cases.
        \begin{enumerate}
                \item Assuming that expression (\ref{factor1}), (\ref{factor2}), or (\ref{factor3}) is equal to zero gives us $p_1 = p_2 + p_3$, $p_2 = p_1 + p_3$, or $p_3 = p_1 + p_2$ respectively. In all three of these cases
                \[
                        p \in \operatorname{cone}\left\{e_1+e_2,e_1+e_3,e_2 + e_3  \right\}\setminus \left(\operatorname{span}\{e_1+e_2\} \cup \operatorname{span}\{e_1+e_3\} \cup \operatorname{span}\{e_2+e_3\}\right).
                \]
                \item Assume that expressions (\ref{factor1}), (\ref{factor2}), and (\ref{factor3}) are all less than zero. Then adding the inequalities formed by (\ref{factor2}) and (\ref{factor3}) gives us $2p_1 < 0$, which is a contradiction.
                \item Assume that expression (\ref{factor3}) is less than zero while expressions (\ref{factor1}) and (\ref{factor2}) are greater than zero. Then negating inequality (\ref{factor2}) and adding it to (\ref{factor3}) gives us $2p_2 < 0$, a contradiction.
                \item Assume that expression (\ref{factor2}) is less than zero while expressions (\ref{factor1}) and (\ref{factor3}) are greater than zero. Then negating inequality (\ref{factor1}) and adding it to (\ref{factor2}) gives us $2p_3 < 0$, a contradiction.
                \item Assume that expression (\ref{factor1}) is less than zero while expressions (\ref{factor2}) and (\ref{factor3}) are greater than zero. These inequalities imply that  $p \in \operatorname{cone}\{e_1+e_2,e_1+e_3,e_2+e_3\}$. However, $p$ being strictly positive implies that $p \notin \operatorname{span}\{e_1+e_2\} \cup \operatorname{span}\{e_1+e_3\} \cup \operatorname{span}\{e_2+e_3\}.$
        \end{enumerate}
\end{proof}

With Lemma \ref{lem:coefficients_cone} in hand we can cut down our domain to come from the specified to domain to arrive at a parametrization of the curved boundary. Furthermore, since each expression for $a_{ij}$ is homogeneous and quadratic in the numerator and denominator, the length of the positive null vector chosen has no effect on the copositive matrix generated. Thus, we can assume that the null vector is from a simplex to simplify our domain even further. 

\begin{figure}[h!]
        \centering
        \includegraphics[scale=0.4]{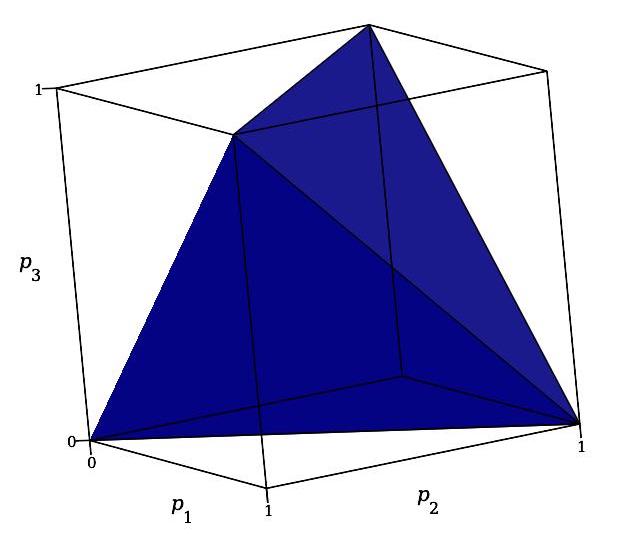}
        \caption{Possible null vectors for $\widehat{\mathcal{CF}}_3$}
        \label{fig:NullSpace}
\end{figure}

\noindent Since our domain now is a portion of a simplex. We will do a change of variables so that it is just a simplex. We will do the change of variables: 
\[p_1 = t_2+t_3, \hspace{1em} p_2 = t_1+t_3, \hspace{1em} p_3=t_1+t_2,\]
and now our domain is the standard simplex, $\Delta_3$, without its corners. Following this change of variables through gives us matrices of the following form.
\[
        \begin{bmatrix}
                1 & \frac{p_3^2 - (p_1^2 + p_2^2)}{2p_1p_2} & \frac{p_2^2 - (p_1^2 + p_3^2)}{2p_1p_3}\\
                \frac{p_3^2 - (p_1^2 + p_2^2)}{2p_1p_2} & 1 &  \frac{p_1^2 - (p_2^2 + p_3^2)}{2p_2p_3}\\
                \frac{p_2^2 - (p_1^2 + p_3^2)}{2p_1p_3} & \frac{p_1^2 - (p_2^2 + p_3^2)}{2p_2p_3} & 1
        \end{bmatrix} \mapsto
        \begin{bmatrix}
                1 & \frac{2t_1t_2}{(1-t_1)(1-t_2) }-1 & \frac{2t_1t_3}{(1-t_1)(1-t_3) }-1 \\
                \frac{2t_1t_2}{(1-t_1)(1-t_2) }-1 & 1 & \frac{2t_2t_3}{(1-t_2)(1-t_3) }-1 \\
                \frac{2t_1t_3}{(1-t_1)(1-t_3) }-1 & \frac{2t_2t_3}{(1-t_2)(1-t_3) }-1 & 1
        \end{bmatrix}
\]
We can formalize this in the following theorem.

\begin{theorem}\label{thm:boundaryParam1}
        The map $\Psi: \operatorname{int}(\Delta_3) \to \{A \in \partial\widehat{\mathcal{CF}}_3 : a_{12} + a_{13} + a_{23} < -1 \}$ defined by
        \[
                (t_1,t_2,t_3) \mapsto
                \begin{bmatrix}
                        1 & \frac{2t_1t_2}{(1-t_1)(1-t_2) }-1 & \frac{2t_1t_3}{(1-t_1)(1-t_3) }-1 \\
                        \frac{2t_1t_2}{(1-t_1)(1-t_2) }-1 & 1 & \frac{2t_2t_3}{(1-t_2)(1-t_3) }-1 \\
                        \frac{2t_1t_3}{(1-t_1)(1-t_3) }-1 & \frac{2t_2t_3}{(1-t_2)(1-t_3) }-1 & 1
                \end{bmatrix}
        \]
        is $\Psi$ bijective.
\end{theorem}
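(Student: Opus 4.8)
The plan is to verify the three ingredients of bijectivity separately — that $\Psi$ is well-defined (its image lies in the stated codomain), injective, and surjective — translating each statement about the parameter $t$ back into a statement about the positive null vector $p$ that drives all the preceding lemmas. The bridge is the substitution itself: on $\operatorname{int}(\Delta_3)$ we have $t_1+t_2+t_3=1$, so the change of variables $p_1=t_2+t_3,\ p_2=t_1+t_3,\ p_3=t_1+t_2$ reads simply $p_i=1-t_i$, with inverse $t_i=1-p_i$. I would keep this dictionary in hand throughout and reduce everything to Lemmas \ref{lem:curvedBoundary}, \ref{lem:singular_coefficients}, and \ref{lem:coefficients_cone}.

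For well-definedness, fix $t\in\operatorname{int}(\Delta_3)$ and set $p=(1-t_1,1-t_2,1-t_3)$. Since each $t_i\in(0,1)$ we get $p\in\mathbb{R}^3_{>0}$, and by construction the entries of $\Psi(t)$ are exactly the expressions of Lemma \ref{lem:singular_coefficients}, so $\Psi(t)p=0$. Writing $p=\alpha(e_1+e_2)+\beta(e_1+e_3)+\gamma(e_2+e_3)$ one finds $\alpha=t_3,\ \beta=t_2,\ \gamma=t_1$, all strictly positive, so $p$ lies in the open cone of Lemma \ref{lem:coefficients_cone} and hence $\Psi(t)\in\widehat{\mathcal{CF}}_3$; membership in the boundary follows because $p^\intercal\Psi(t)p=0$ with $p>0$, so perturbing a single off-diagonal entry makes the form negative at $p$. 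The one genuine computation is the sum $a_{12}+a_{13}+a_{23}+1$: over the common denominator $2p_1p_2p_3$ it factors as $\tfrac{-(p_2+p_3-p_1)(p_1+p_3-p_2)(p_1+p_2-p_3)}{2p_1p_2p_3}=\tfrac{-4t_1t_2t_3}{(1-t_1)(1-t_2)(1-t_3)}$, which is strictly negative precisely because all $t_i>0$. This places $\Psi(t)$ in the codomain.

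For injectivity, I would show each $A=\Psi(t)$ has a one-dimensional kernel, so that $p$ (hence $t_i=1-p_i$, after normalizing $\sum p_i=2$) is recovered from $A$. The $2\times2$ principal minors computed just before the theorem equal $\tfrac{4t_1t_2t_3}{p_i^2p_j^2}>0$ on the open simplex, so $A$ has rank at least two; being singular it has rank exactly two. Thus if $\Psi(t)=\Psi(s)=A$, the vectors $p$ and $q$ with components $p_i=1-t_i$ and $q_i=1-s_i$ are both positive null vectors of $A$, hence proportional, and since each sums to $2$ they coincide, giving $t=s$.

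The real work is surjectivity, and here the weak-versus-strict inequality is the main obstacle. Given $A$ in the codomain we have $a_{12}+a_{13}+a_{23}\le-1$, so Lemma \ref{lem:curvedBoundary} supplies a non-negative null vector; but the proof of that lemma shows that a zero component forces $a_{12}+a_{13}+a_{23}=-1$. The strict inequality $a_{12}+a_{13}+a_{23}<-1$ therefore rules out zero components and upgrades the null vector to $p\in\mathbb{R}^3_{>0}$. Lemma \ref{lem:singular_coefficients} then expresses $A$ through $p$, Lemma \ref{lem:coefficients_cone} places $p$ in the cone, and the factorization above identifies the strict inequality with $\alpha\beta\gamma>0$, i.e.\ all three barycentric coefficients positive. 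Normalizing to $\sum p_i=2$ and setting $t_i=1-p_i$ then yields $t\in\operatorname{int}(\Delta_3)$ with $\Psi(t)=A$. I expect the delicate points to be exactly this bookkeeping — confirming that the strict inequality is what pins the preimage into the \emph{open} simplex, with the boundary $t_i=0$ corresponding to the equality $a_{12}+a_{13}+a_{23}=-1$ — together with the rank-two fact underlying injectivity; the algebra itself is routine.
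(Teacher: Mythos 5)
Your proof is correct and follows essentially the same route as the paper: factor $\Psi$ through the substitution $p_i = 1-t_i$, obtain injectivity from the rank-two (one-dimensional kernel) argument via the $2\times 2$ principal minors, and obtain surjectivity from the existence of a non-negative null vector (Lemma \ref{lem:curvedBoundary}) upgraded to a strictly positive one by the strict inequality. Your explicit factorization $a_{12}+a_{13}+a_{23}+1 = \frac{-4t_1t_2t_3}{(1-t_1)(1-t_2)(1-t_3)}$ is a welcome refinement: it makes the well-definedness step (that the image really lands in $\{a_{12}+a_{13}+a_{23}<-1\}$), which the paper handles only indirectly by checking where the boundary of the extended domain maps, completely explicit.
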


\begin{proof}
        As discussed above, $\Psi$ is a composition. Namely, $\Psi = f \circ g$, where 
        \[g: \operatorname{int}(\Delta_3) \to \operatorname{int}\left(\operatorname{conv}\{e_1+e_2,e_1+e_2,e_2+e_3\}\right)\] 
        is defined by $g(t_1,t_2,t_3) = (t_2+t_3,t_1+t_3,t_1+t_2)$, and 
        \[f: \operatorname{int}\left(\operatorname{conv}\{e_1+e_2,e_1+e_2,e_2+e_3\}\right) \to \{A \in \partial\widehat{\mathcal{CF}}_3 : a_{12} + a_{13} + a_{23} < -1\}\]
        is defined by
        \[
                f(p_1,p_2,p_3) = 
                \begin{bmatrix}
                        1 & \frac{p_3^2 - (p_1^2 + p_2^2)}{2p_1p_2} & \frac{p_2^2 - (p_1^2 + p_3^2)}{2p_1p_3}\\
                        \frac{p_3^2 - (p_1^2 + p_2^2)}{2p_1p_2} & 1 &  \frac{p_1^2 - (p_2^2 + p_3^2)}{2p_2p_3}\\
                        \frac{p_2^2 - (p_1^2 + p_3^2)}{2p_1p_3} & \frac{p_1^2 - (p_2^2 + p_3^2)}{2p_2p_3} & 1
                \end{bmatrix}.
        \]
        It can be easily checked that the linear map $g$ is bijective. Now we will check that $f$ is bijective, completing the proof. Note that $f$ is bijective if all matrices in $\{A \in \partial\widehat{\mathcal{CF}}_3 : a_{12} + a_{13} + a_{23} < -1\}$ can be uniquely determined by a single null vector. We will begin by checking the rank of matrices in the image of $f$. This map $f$ was specifically chosen so that $|f(p)| = 0$. So clearly the rank of $f(p)$ is either 1 or 2. For now let us assume that the rank is 1. Then by following the discussion in the proof of Lemma \ref{lem:coefficients_cone} we see that at least one of the following equations must be true to force the $2 \times 2$ minors to be 0.
        \begin{align*}
                p_1 - p_2 - p_3 &= 0 \\
                p_1 - p_2 + p_3 &= 0 \\
                p_1 + p_2 - p_3 &= 0
        \end{align*}
        This implies $p_1 = p_2 + p_3$, $p_2 = p_1+p_3$, or $p_3 = p_1+p_2$. All of these conditions occur on the boundary of $\operatorname{conv}\{e_1+e_2,e_1+e_3,e_2+e_3\}$, which is not in the domain of $f$. Hence, $f(p)$ is rank 2 for all $p$ in the domain.
        
        This is actually all we need. If $f(p) = f(p')$ with $p \neq p'$. Then because of our choice of domain, $p$ and $p'$ are linearly independent. Hence, $f(p)$ has at least a 2 dimensional null space, but this contradicts $f(p)$ being rank 2. Thus, $f$ is injective. Based on Lemma \ref{lem:coefficients_cone} we know that if the domain of $f$ is expanded to be $\operatorname{conv}\{e_1+e_2,e_1+e_3,e_2+e_3\}\setminus\{e_1+e_2,e_1+e_3,e_2+e_3\}$, then its image would be a superset of $\{A \in \partial\widehat{\mathcal{CF}}_3 : a_{12} + a_{13} + a_{23} < -1\}$. So to check surjectivity we will show that all of the points left out of the domain of $f$ do not map to points in the codomain. From symmetry we only need to see where $f$ maps points which satisfy $p_1 = p_2+p_3$. If we make this substitution we see that
        \[
                f(p_2+p_3,p_2,p_3) =
                \begin{bmatrix}
                        1 & -1 & -1\\
                        -1 & 1 &  1\\
                        -1 & 1 & 1
                \end{bmatrix},
        \]
        which does not satisfy the condition that $a_{12} + a_{13} + a_{23} < -1$. Thus, $f$ is surjective.
        
\end{proof}

\section{Resolving Singularities in the Boundary Parametrization}

The parametrization in Theorem \ref{thm:boundaryParam1} works for almost all points on this curved boundary. Unfortunately, we miss out on most of its limit points. For instance, we cannot obtain the matrix
\[
        \begin{bmatrix}
                1 & -1 & 0 \\
                -1 & 1 & 0 \\
                0 & 0 & 1
        \end{bmatrix}.
\]
Furthermore, the domain in Theorem \ref{thm:boundaryParam1} is slightly awkward. In this section we will fix both of these issues. We will change our map so that it is defined everywhere on $\Delta_3$, and in doing so we will pick up the desired limit points so that we will be parametrizing the closed set $\{A \in \partial\widehat{\mathcal{CF}}_3 : a_{12}+a_{13}+ a_{23} \leq -1 \}$. To do this we will dive a little deeper into the limit points of our parametrization 
\[\phi: \operatorname{conv}\{e_1+e_2,e_1+e_3,e_2+e_3\}\setminus\{e_1+e_2,e_1+e_3,e_2+e_3\} \to \{A \in \partial\widehat{\mathcal{CF}}_3 : a_{12}+a_{13}+ a_{23} \leq -1 \}\]
\[
(p_1,p_2,p_3) \mapsto
\begin{bmatrix}
        1 & \frac{p_3^2 - (p_1^2 + p_2^2)}{2p_1p_2} & \frac{p_2^2 - (p_1^2 + p_3^2)}{2p_1p_3}\\
        \frac{p_3^2 - (p_1^2 + p_2^2)}{2p_1p_2} & 1 &  \frac{p_1^2 - (p_2^2 + p_3^2)}{2p_2p_3}\\
        \frac{p_2^2 - (p_1^2 + p_3^2)}{2p_1p_3} & \frac{p_1^2 - (p_2^2 + p_3^2)}{2p_2p_3} & 1
\end{bmatrix}.
\]

As we saw in Theorem \ref{thm:boundaryParam1}, this map is nicely behaved when we restrict the domain to be just the interior of $\operatorname{conv}\{e_1+e_2,e_1+e_3,e_2+e_3\}$, that is, when the matrix has a null vector with strictly positive components. So now let us consider matrices in $\widehat{\mathcal{CF}}_3$ with null vectors which have 0 components. So we will assume $A \in \widehat{\mathcal{CF}}_3$ and that $Ap = 0$ for some $p \in \mathbb{R}^3_{\geq 0}\setminus\{0\}$. Note that $p$ cannot have two components being 0. So we will arbitrarily choose $p_1$ to be 0 and see the results. In order for $p$ to be a null vector we have
\[
        \begin{bmatrix}
                1 & a_{12} & a_{13} \\
                 a_{12} & 1 & a_{23} \\
                 a_{13} & a_{23} & 1
        \end{bmatrix}
        \begin{bmatrix}
                0 \\
                p_2 \\
                p_3
        \end{bmatrix} = 
        \begin{bmatrix}
                a_{12}p_2 +  a_{13}p_3 \\
                p_2 + a_{23}p_3 \\
                a_{23}p_2 + p_3
        \end{bmatrix} = 
        \begin{bmatrix}
                0 \\ 0 \\ 0
        \end{bmatrix}.
\]
From the last two equations we have that $a_{23} = -1$ and $p_2=p_3 = 1$. Now from the first equation we get $a_{12} + a_{13} = 0$. Also, from Lemma \ref{lem:bound_on_nondiagonal} we know that $a_{12},a_{13} \geq -1$. Putting this all together we now have
\[A =
        \begin{bmatrix}
                1 & t & -t \\
                t & 1 & -1 \\
                -t & -1 & 1
        \end{bmatrix}
\]
for some $-1 \leq t \leq 1$. In other words, requiring the null vector to be $(0,1,1)$ doesn't give us just one matrix in $\{A \in \partial\widehat{\mathcal{CF}}_3 : a_{12}+a_{13}+ a_{23} \leq -1 \}$. Instead, we obtain a 1-dimensional set. This is the important idea. Here we see a single point ``splitting" into infinitely many points. And by symmetry we get similar results if we require a different component of the null vector to be 0. These results are summarized in Figure \ref{fig:SingularCorners}.

\begin{figure}[h!]
        \centering
        \includegraphics[scale=0.6]{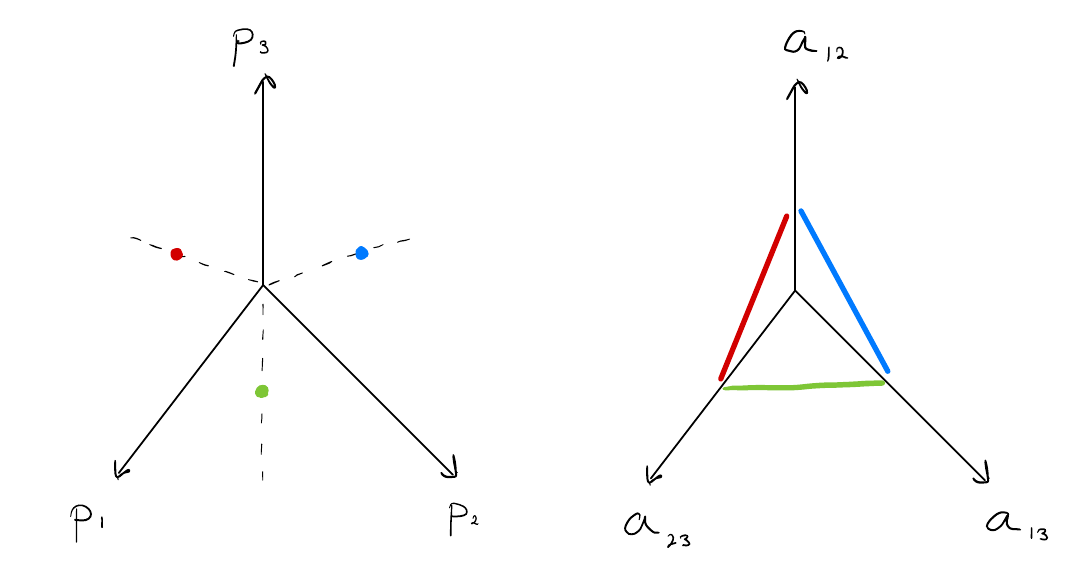}
        \caption{Null vector (left) and corresponding matrices in $\widehat{\mathcal{CF}}_3$ (right)}
        \label{fig:SingularCorners}
\end{figure}

Let's continue to see what happens if we consider other points on the boundary of the domain. So now we will assume that the null vector, $p$, has components $(1,t,1-t)$ for $0<t<1$. In other words $p$ is strictly in between the points $(1,1,0)$ and $(1,0,1)$. Here we get
\[
        \begin{bmatrix}
                1 & a_{12} & a_{13} \\
                a_{12} & 1 & a_{23} \\
                a_{13} & a_{23} & 1
        \end{bmatrix}
        \begin{bmatrix}
                1 \\
                t \\
                1-t
        \end{bmatrix} = 
        \begin{bmatrix}
                (1+a_{13}) + (a_{12}-a_{13})t \\
                (a_{12}+a_{23}) + (1-a_{23})t \\
                (1+a_{13}) - (1-a_{23})t
        \end{bmatrix} = 
        \begin{bmatrix}
                0 \\ 0 \\ 0
        \end{bmatrix}.
\]
Since $t$ is not 0 or 1, we only get a single solution for this system, namely, $(a_{12},a_{13},a_{23}) = (-1,-1,1)$. So here we kind of have the opposite behavior of what we saw before. Now we have infinitely many points ``collapsing" to a single point. Again, by symmetry we have similar behavior for a permutation of the null vector. We can summarize this in Figure \ref{fig:SingularEdges}.

\begin{figure}[h!]
        \centering
        \includegraphics[scale=0.6]{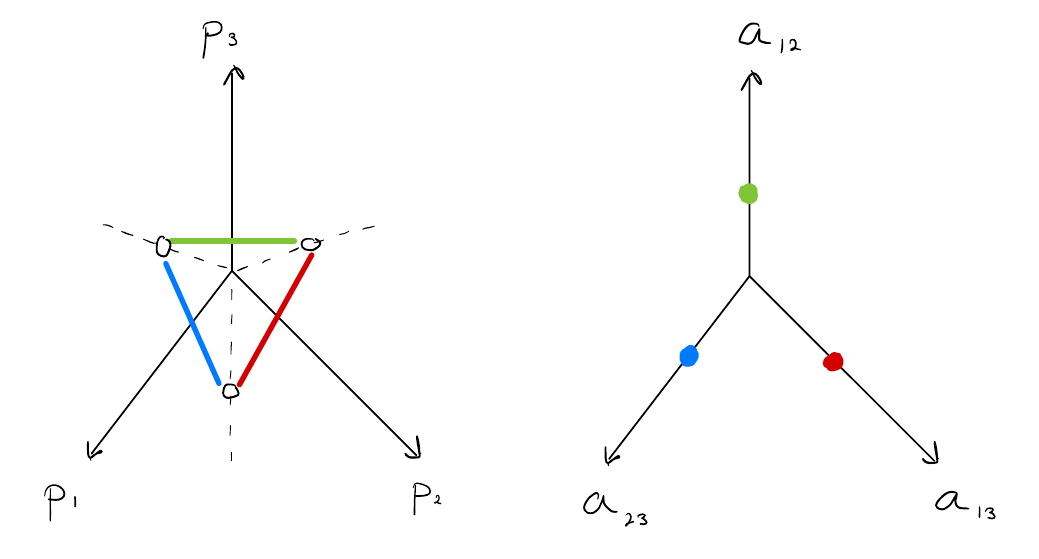}
        \caption{Null vector (left) and corresponding matrix in $\widehat{\mathcal{CF}}_3$ (right)}
        \label{fig:SingularEdges}
\end{figure}

Now that we see the issues we can require our change of coordinates to address them. Not only do we want to do a change of coordinates so that our domain is $\Delta_3$, but we also want this change of coordinates to have singularities which will ``counter act" the behaviors we see in our parametrization. So now we have the problem. Find a map $\psi: \Delta_3 \to \operatorname{conv}\{e_1+e_2,e_1+e_3,e_2+e_3\}$ such that $\psi$ applied to a corner point is singular, and $\psi$ applied to an edge gives us a corner point.
        
This required some searching, but eventually we found the map $\psi: \Delta_3\setminus\{e_1,e_2,e_3\} \to \operatorname{conv}\{e_1+e_2,e_1+e_3,e_2+e_3\}$ defined by
\[
        \psi(t) = \left(\frac{t_3(1-t_1t_2)}{(1-t_1)(1-t_2)},
                                                                \frac{t_2(1-t_1t_3)}{(1-t_1)(1-t_3)},
                                                                \frac{t_1(1-t_2t_3)}{(1-t_2)(1-t_3)}\right).
\]
This map has all of the desired properties. However, we don't have a good explanation of how to construct this map. We simply looked for one which has the outlined properties. This of course leads to questions like, is this the best way to change our coordinates to cancel out the singularities? Is this the only way to change coordinates to achieve this? We don't have the answers to these questions.

With the ultimate goal of having a bijective or almost bijective map in the end we will now check this in the following lemma.

\begin{lemma}\label{lem:changeOfCoords}
        The map $\psi: \operatorname{int}(\Delta_3) \to \operatorname{int}(\operatorname{conv}\{e_1+e_2,e_1+e_3,e_2+e_3\})$ defined by
        \[
        \psi(t) = \left(\frac{t_3(1-t_1t_2)}{(1-t_1)(1-t_2)},
        \frac{t_2(1-t_1t_3)}{(1-t_1)(1-t_3)},
        \frac{t_1(1-t_2t_3)}{(1-t_2)(1-t_3)}\right)
        \]
        is bijective.
\end{lemma}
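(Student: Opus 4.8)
The plan is to pass to the complementary coordinates $q_i = 1 - p_i$ and prove the equivalent statement that $t \mapsto q(t)$ is a bijection $\operatorname{int}(\Delta_3) \to \operatorname{int}(\Delta_3)$. This reduction helps in two ways. First, writing a target point as $p = \alpha(e_1+e_2)+\beta(e_1+e_3)+\gamma(e_2+e_3)$ with $\alpha+\beta+\gamma=1$ shows $q = (\gamma,\beta,\alpha)$, so $p \in \operatorname{int}(\operatorname{conv}\{e_1+e_2,e_1+e_3,e_2+e_3\})$ precisely when $q \in \operatorname{int}(\Delta_3)$. Second, the $q_i$ are far more symmetric than the $p_i$. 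I would first check that $\psi$ is well defined into the stated codomain, i.e. that for $t \in \operatorname{int}(\Delta_3)$ one has $q_i(t) > 0$ and $\sum_i q_i(t) = 1$ (equivalently $\sum_i p_i = 2$). Positivity is immediate, and the sum identity is a routine polynomial computation using $1-t_i = \sum_{j\neq i} t_j$, reducing after clearing denominators to a symmetric identity on the hyperplane $t_1+t_2+t_3=1$.

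The heart of the argument is a change of unknowns that decouples the system. A direct computation gives the clean forms
\[
q_1 = \frac{t_1 t_2(1+t_3)}{(1-t_1)(1-t_2)}, \qquad q_2 = \frac{t_1 t_3(1+t_2)}{(1-t_1)(1-t_3)}, \qquad q_3 = \frac{t_2 t_3(1+t_1)}{(1-t_2)(1-t_3)},
\]
from which one finds that the auxiliary quantities $\rho_1 = q_1 q_2/q_3$, $\rho_2 = q_1 q_3/q_2$, $\rho_3 = q_2 q_3/q_1$ satisfy the remarkably simple relation
\[
\rho_i = P\,\frac{t_i^2}{(1-t_i^2)^2}, \qquad P := (1+t_1)(1+t_2)(1+t_3).
\]
Equivalently, with $m = \sqrt{P}$, each $t_i$ solves $h(t_i) = \sqrt{\rho_i}/m$, where $h(\tau) := \tau/(1-\tau^2)$ is a strictly increasing bijection of $(0,1)$ onto $(0,\infty)$. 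Discovering this substitution is the hard part: solving $q(t)=q$ for $t$ directly yields an intractable coupled system, whereas the $\rho_i$ separate the dependence on each $t_i$.

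Injectivity then follows quickly. If $\psi(t) = \psi(t')$, the $q_i$ and hence the $\rho_i$ agree, so $h(t_i) = \lambda\, h(t_i')$ for all $i$, where $\lambda = \sqrt{P(t')/P(t)}$ does not depend on $i$. If $\lambda > 1$ then $h(t_i) > h(t_i')$, and monotonicity of $h$ gives $t_i > t_i'$ for every $i$, contradicting $\sum t_i = \sum t_i' = 1$; the case $\lambda < 1$ is symmetric, so $\lambda = 1$ and $t = t'$.

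For surjectivity, fix $q \in \operatorname{int}(\Delta_3)$ and compute $\rho_i > 0$. For $m > 0$ set $t_i(m) = h^{-1}(\sqrt{\rho_i}/m) \in (0,1)$; each is continuous and strictly decreasing in $m$ with limits $1$ and $0$ as $m \to 0^+$ and $m \to \infty$, so $S(m) = \sum_i t_i(m)$ decreases continuously from $3$ to $0$ and there is a unique $m_0$ with $S(m_0)=1$. Put $t = (t_1(m_0),t_2(m_0),t_3(m_0)) \in \operatorname{int}(\Delta_3)$; it remains to confirm $\psi(t) = 1-q$. Writing $\tilde q = q(t)$ and $\tilde\rho_i$ for its auxiliary quantities, the boxed relation gives $\tilde\rho_i = (P(t)/m_0^2)\,\rho_i = \mu\,\rho_i$ with $\mu$ independent of $i$. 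Since each $q_i$ is the square root of a product of two of the $\rho_j$ (namely $q_1=\sqrt{\rho_1\rho_2}$, $q_2=\sqrt{\rho_1\rho_3}$, $q_3=\sqrt{\rho_2\rho_3}$), scaling all $\rho_j$ by $\mu$ scales $\tilde q_i = \mu q_i$; but $\sum_i \tilde q_i = \sum_i q_i = 1$ forces $\mu = 1$, so $\tilde q = q$ and $\psi(t) = 1-q$. I expect the only genuinely nontrivial obstacle to be locating the decoupling quantities $\rho_i$ and establishing the two algebraic identities above; once these are in hand, the monotonicity arguments make injectivity and surjectivity essentially formal.
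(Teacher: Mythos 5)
Your proof is correct --- I checked the key identities and they all hold: on $t_1+t_2+t_3=1$ one indeed has $q_1=1-p_1=\frac{t_1t_2(1+t_3)}{(1-t_1)(1-t_2)}$ (and cyclically), $\sum_i q_i = \frac{e_2-e_3}{(1-t_1)(1-t_2)(1-t_3)}=1$ since both numerator and denominator equal $e_2-e_3$, and $q_1q_2/q_3 = \frac{t_1^2(1+t_2)(1+t_3)}{(1-t_1)^2(1+t_1)} = P\,\frac{t_1^2}{(1-t_1^2)^2}$. But your route is genuinely different from the paper's, and substantially stronger. The paper proves injectivity by clearing denominators, reducing modulo the simplex relations, and handing the resulting degree-$5$ polynomial system to a computer algebra system, which reports that the only solutions off the diagonal $t=r$ force $t_2=0$ (a boundary point); its surjectivity argument merely writes down the system $\psi(t)=r$ and stops, so that half of the lemma is effectively asserted rather than proved. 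Your decoupling via $q_i = 1-p_i$ and $\rho_i = P\,h(t_i)^2$ with $h(\tau)=\tau/(1-\tau^2)$ replaces both the CAS computation and the missing surjectivity step with elementary monotonicity and an intermediate-value argument: injectivity falls out of the fact that a common scaling $h(t_i)=\lambda h(t_i')$ is incompatible with $\sum t_i = \sum t_i' = 1$ unless $\lambda = 1$, and surjectivity from tuning the single parameter $m$ so that $\sum h^{-1}(\sqrt{\rho_i}/m)=1$, with the normalization $\sum q_i = 1$ absorbing the unknown factor $P(t)/m_0^2$. What your approach buys is a self-contained, human-checkable proof of the full statement (including the surjectivity the paper leaves open); what it costs is only the unmotivated discovery of the quantities $\rho_i$, which is no worse than the paper's own admission that $\psi$ itself was found by search.
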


\begin{proof}
        We will start with injectivity. Assume $\psi(t) = \psi(r)$ for $t,r \in \operatorname{int}(\Delta_3)$. Writing out this condition gives us
        \begin{align*}
                \frac{t_3(1-t_1t_2)}{(1-t_1)(1-t_2)} - \frac{r_3(1-r_1r_2)}{(1-r_1)(1-r_2)} &= 0 \\
                \frac{t_2(1-t_1t_3)}{(1-t_1)(1-t_3)} - \frac{r_2(1-r_1r_3)}{(1-r_1)(1-r_3)} &= 0 \\
                \frac{t_1(1-t_2t_3)}{(1-t_2)(1-t_3)} - \frac{r_1(1-r_2r_3)}{(1-r_2)(1-r_3)} &= 0.
        \end{align*}
        Now if we combine these fractions, set the numerators equal to zero, and reduce the equations using the conditions that $t_1+t_2+t_3 - 1= 0$ and $r_1+r_2+r_3-1 =0$, we get the incredibly long equations
        \begin{align*}
                r_{2}^{2} r_{3} t_{2}^{2}+r_{2}^{2} r_{3} t_{2} t_{3}-r_{2}^{2} t_{2}^{2} t_{3}-r_{2}^{2} t_{2} t_{3}^{2}+r_{2} r_{3}^{2} t_{2}^{2}+r_{2} r_{3}^{2} t_{2} t_{3}-
                r_{2} r_{3} t_{2}^{2} t_{3}-r_{2} r_{3} t_{2} t_{3}^{2}-& \\
                r_{2}^{2} r_{3} t_{2}-r_{2}^{2} r_{3} t_{3} + r_{2}^{2} t_{2} t_{3}- r_{2} r_{3}^{2} t_{2}-r_{2} r_{3}^{2} t_{3}-r_{2} r_{3} t_{2}^{2}+r_{2} t_{2}^{2} t_{3}+r_{2} t_{2} t_{3}^{2}+r_{3} t_{2}^{2} t_{3}+& \\
                r_{3} t_{2} t_{3}^{2}-r_{2}^{2} t_{3}+r_{2} r_{3} t_{2}-r_{2} t_{2} t_{3}+r_{3} t_{2}^{2}+r_{2} t_{3}-r_{3} t_{2} &= 0 \\
                r_{2}^{2} r_{3} t_{2} t_{3}+r_{2}^{2} r_{3} t_{3}^{2}+r_{2} r_{3}^{2} t_{2} t_{3}+r_{2} r_{3}^{2} t_{3}^{2}-r_{2} r_{3} t_{2}^{2} t_{3}-r_{2} r_{3} t_{2} t_{3}^{2}-r_{3}^{2} t_{2}^{2} t_{3}-r_{3}^{2} t_{2} t_{3}^{2}-&\\
                r_{2}^{2} r_{3} t_{2}-r_{2}^{2} r_{3} t_{3}-r_{2} r_{3}^{2} t_{2}-r_{2} r_{3}^{2} t_{3}-r_{2} r_{3} t_{3}^{2}+r_{2} t_{2}^{2} t_{3}+r_{2} t_{2} t_{3}^{2}+r_{3}^{2} t_{2} t_{3}+r_{3} t_{2}^{2} t_{3}+&\\
                r_{3} t_{2} t_{3}^{2}+r_{2} r_{3} t_{3}+r_{2} t_{3}^{2}-r_{3}^{2} t_{2}-r_{3} t_{2} t_{3}-r_{2} t_{3}+r_{3} t_{2} &= 0 \\
                r_{2}^{2} r_{3} t_{2} t_{3}-r_{2} r_{3}^{2} t_{2} t_{3}+r_{2} r_{3} t_{2}^{2} t_{3}+r_{2} r_{3} t_{2} t_{3}^{2}+r_{2}^{2} r_{3} t_{2}+r_{2}^{2} r_{3} t_{3}+r_{2} r_{3}^{2} t_{2}+r_{2} r_{3}^{2} t_{3}- &\\
                r_{2} t_{2}^{2} t_{3}-r_{2} t_{2} t_{3}^{2}-r_{3} t_{2}^{2} t_{3}-r_{3} t_{2} t_{3}^{2}-r_{2}^{2} r_{3}-r_{2} r_{3}^{2}- &\\
                2 r_{2} r_{3} t_{2}-2 r_{2} r_{3} t_{3}+2 r_{2} t_{2} t_{3}+2 r_{3} t_{2} t_{3}+t_{2}^{2} t_{3}+t_{2} t_{3}^{2}+2 r_{2} r_{3}-2 t_{2} t_{3} &= 0.
        \end{align*}
        Using a computer algebra system we get that the solutions to this system are either $(t_1,t_2,t_3) = (r_1,r_2,r_3)$ or $t_1$ is free and $t_2$ is a root of
        \begin{align*}
                \Big(r_{2}^{4} r_{3}^{2}+2 r_{2}^{3} r_{3}^{3}+r_{2}^{2} r_{3}^{4}-r_{2}^{4} r_{3}-3 r_{2}^{3} r_{3}^{2}-2 r_{2}^{2} r_{3}^{3}+r_{2}^{3}+2 r_{2}^{2} r_{3}+r_{2} r_{3}^{2}-r_{2}^{2}-2 r_{2} r_{3}-r_{3}^{2}+r_{2}+r_{3}\Big)z^3 +&\\
                \Big(-r_{2}^{4} r_{3}^{2}-2 r_{2}^{3} r_{3}^{3}-r_{2}^{2} r_{3}^{4}+r_{2}^{4} r_{3}+3 r_{2}^{3} r_{3}^{2}+2 r_{2}^{2} r_{3}^{3}-3 r_{2}^{3} r_{3}-3 r_{2}^{2} r_{3}^{2}-2 r_{2} r_{3}^{2}+2 r_{2}^{2}+5 r_{2} r_{3}+3 r_{3}^{2}-r_{2}-3 r_{3}\Big)z^2 +&\\
                \Big(-r_{2}^{4} r_{3}^{2}-2 r_{2}^{3} r_{3}^{3}-r_{2}^{2} r_{3}^{4}-r_{2}^{4} r_{3}+r_{2}^{3} r_{3}^{2}+2 r_{2}^{2} r_{3}^{3}+3 r_{2}^{3} r_{3}+r_{2}^{2} r_{3}^{2}-r_{2}^{2} r_{3}+r_{2} r_{3}^{2}+r_{2}^{2}-r_{2} r_{3}-2 r_{2}\Big)z +&\\
                \Big(r_{2}^{4} r_{3}^{2}+2 r_{2}^{3} r_{3}^{3}+r_{2}^{2} r_{3}^{4}+r_{2}^{4} r_{3}-r_{2}^{3} r_{3}^{2}-2 r_{2}^{2} r_{3}^{3}-2 r_{2}^{3} r_{3}+r_{2}^{3}+r_{2}^{2} r_{3}-2 r_{2}^{2}\Big).
        \end{align*}
        Again, using a computer algebra system tells us that when $r \in \Delta_3$, the only root that can happen on the interval $[0,1]$ is a root precisely at zero. This tells us if $t_1$ is free, then $t_2 = 0$. But this implies $t$ is on the boundary of $\Delta_3$. Hence, $\psi$ is injective.
        
        Now for surjectivity. Let $r \in \operatorname{int}(\operatorname{conv}\{e_1+e_2,e_1+e_3,e_2+e_3\})$. Then $r = s_1(e_2+e_3) + s_2(e_1+e_3) + s_3(e_1+e_2)$ for some $s \in \operatorname{int}(\Delta_3)$. Now we must find $t \in \operatorname{int}(\Delta_3)$ such that $\psi(t) = r$. So we get the following system.
        \begin{align*}
                \frac{t_3(1-t_1t_2)}{(1-t_1)(1-t_2)} &= s_1+s_2 \\
                \frac{t_2(1-t_1t_3)}{(1-t_1)(1-t_3)} &= s_1+s_3 \\
                \frac{t_1(1-t_2t_3)}{(1-t_2)(1-t_3)} &= s_2+s_3.
        \end{align*}
\end{proof}

\begin{figure}
        \centering
        \begin{subfigure}{.5\textwidth}
                \centering
                \includegraphics[width=\linewidth]{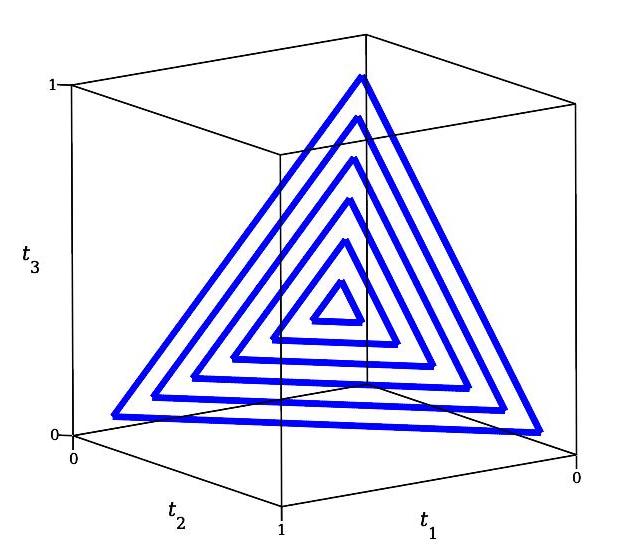}
        \end{subfigure}%
        \begin{subfigure}{.5\textwidth}
                \centering
                \includegraphics[width=\linewidth]{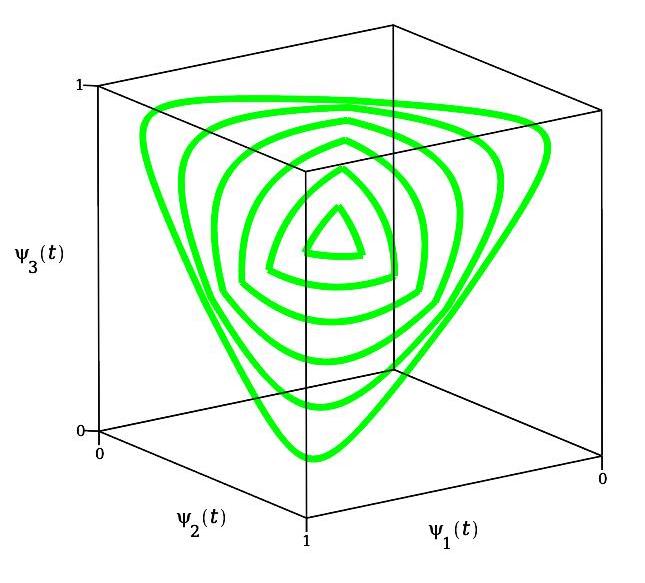}
        \end{subfigure}
        \caption{Standard simplex (left) and $\psi$ evaluated on those curves (right)}
        \label{fig:CoordLoops}
\end{figure}

We can visualize this change of coordinates in Figure \ref{fig:CoordLoops}. Now if we compose this change of coordinates map with our parametrization map, we get the following map $\Psi: \Delta_3 \to \{A \in \partial\widehat{\mathcal{CF}}_3: a_{12}+a_{13}+a_{23} \leq -1 \}$, which leads to the following theorem.

\begin{theorem}\label{thm:BoundaryParam2}
        The map $\Psi: \Delta_3 \to \{A \in \partial\widehat{\mathcal{CF}}_3: a_{12}+a_{13}+a_{23} \leq -1 \}$ defined by
\end{theorem}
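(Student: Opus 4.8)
The plan is to show that $\Psi$, which is the composition $\phi\circ\psi$ of the parametrization $\phi$ with the change of coordinates $\psi$ from Lemma~\ref{lem:changeOfCoords}, is a bijection (in fact a homeomorphism) from the compact simplex $\Delta_3$ onto the closed target set. Because $\phi$ and $\psi$ are each singular on $\partial\Delta_3$ but were arranged so that their singularities cancel, I would treat $\operatorname{int}(\Delta_3)$ and $\partial\Delta_3$ separately.

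On the interior, the result is essentially already in hand: Lemma~\ref{lem:changeOfCoords} gives that $\psi$ maps $\operatorname{int}(\Delta_3)$ bijectively onto $\operatorname{int}(\operatorname{conv}\{e_1+e_2,e_1+e_3,e_2+e_3\})$, and the proof of Theorem~\ref{thm:boundaryParam1} shows that $\phi$ (the map called $f$ there) carries that open set bijectively onto $\{A\in\partial\widehat{\mathcal{CF}}_3 : a_{12}+a_{13}+a_{23}<-1\}$. Hence $\Psi$ restricts to a bijection from $\operatorname{int}(\Delta_3)$ onto $\{A\in\partial\widehat{\mathcal{CF}}_3 : a_{12}+a_{13}+a_{23}<-1\}$.

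The real work is on $\partial\Delta_3$. First I would verify that the entries of $\Psi$, though assembled from the quotients that blow up in $\phi$ and $\psi$ individually, simplify to rational functions with no poles on $\Delta_3$, so that $\Psi$ extends continuously to all of $\Delta_3$; I expect to confirm this with a computer algebra system, as in Lemma~\ref{lem:changeOfCoords}. I would then evaluate this extension on the boundary. Along the edge $t_1=0$ one has $\psi\to(1,1,0)$, a corner of the convex hull where $\phi$ is singular; resolving the resulting $0/0$ gives
\[
\Psi(0,t_2,1-t_2)=\begin{bmatrix} 1 & -1 & c(t_2) \\ -1 & 1 & -c(t_2) \\ c(t_2) & -c(t_2) & 1 \end{bmatrix},\qquad c(t_2)=(1-2t_2)\,\frac{t_2^2-t_2-1}{t_2^2-t_2+1},
\]
where $c$ is a strictly monotone bijection of $[0,1]$ onto $[-1,1]$ (in particular recovering at $t_2=\tfrac12$ the matrix $\begin{bmatrix}1&-1&0\\-1&1&0\\0&0&1\end{bmatrix}$ that the map of Theorem~\ref{thm:boundaryParam1} missed). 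Thus this edge maps injectively onto one of the limit-families of Section~4 (a permutation of $\begin{bmatrix}1 & t & -t \\ t & 1 & -1 \\ -t & -1 & 1\end{bmatrix}$), and by symmetry the other two edges cover the remaining families, while the corners $e_1,e_2,e_3$ map to the three ``collapsed'' matrices, which are exactly the shared endpoints of adjacent families. This exhibits $\Psi|_{\partial\Delta_3}$ as a bijection onto $\{A\in\partial\widehat{\mathcal{CF}}_3 : a_{12}+a_{13}+a_{23}=-1\}$, the latter set being exhausted by these families via the null-vector analysis of Section~4 together with Lemma~\ref{lem:curvedBoundary}.

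Finally I would assemble the pieces. Continuity on the compact $\Delta_3$ forces $\Psi(\Delta_3)$ to be closed and, with closedness of $\widehat{\mathcal{CF}}_3$, to lie inside $\{a_{12}+a_{13}+a_{23}\le -1\}$; surjectivity and injectivity then follow by combining the interior bijection (sum $<-1$) with the boundary bijection (sum $=-1$), whose images are disjoint. A continuous bijection from a compact space to a Hausdorff space is automatically a homeomorphism, which gives the stronger conclusion for free. The step I expect to be the main obstacle is the boundary analysis above: proving that the double singularity at each corner of $\Delta_3$ is genuinely removable and that each edge-to-family map is monotone, both of which appear to require explicit computation rather than a structural shortcut.
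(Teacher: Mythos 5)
Your proposal follows essentially the same route as the paper: bijectivity on $\operatorname{int}(\Delta_3)$ is inherited from Theorem~\ref{thm:boundaryParam1} together with Lemma~\ref{lem:changeOfCoords}, and the boundary is handled by an explicit edge computation showing each edge of $\Delta_3$ maps monotonically onto an edge of the codomain, with the remaining edges dispatched by symmetry. The only quibble is a labeling slip --- the matrix you display for $\Psi(0,t_2,1-t_2)$ is the one obtained on the edge $t_3=0$ (on $t_1=0$ it is $a_{23}$ that equals $-1$, with $a_{12}=-c(t_2)$ and $a_{13}=c(t_2)$), which you already cover by saying ``a permutation of'' the limit family --- while your added care about the corners, the exhaustion of $\{A : a_{12}+a_{13}+a_{23}=-1\}$ by the three limit families, and the compactness/homeomorphism observation are points the paper leaves implicit.
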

\[
        \Psi(t) =
        \begin{bmatrix}
                1 & \frac{2t_3^2(1+t_1)(1+t_2)}{(1-t_1t_3)(1-t_2t_3)} - 1& \frac{2t_2^2(1+t_1)(1+t_3)}{(1-t_1t_2)(1-t_2t_3)} - 1\\
                \frac{2t_3^2(1+t_1)(1+t_2)}{(1-t_1t_3)(1-t_2t_3)} - 1 & 1 & \frac{2t_1^2(1+t_2)(1+t_3)}{(1-t_1t_2)(1-t_1t_3)} - 1\\
                \frac{2t_2^2(1+t_1)(1+t_3)}{(1-t_1t_2)(1-t_2t_3)} - 1 & \frac{2t_1^2(1+t_2)(1+t_3)}{(1-t_1t_2)(1-t_1t_3)} - 1 & 1 
        \end{bmatrix}.
\]
is bijective.

\begin{proof}
        Based on Theorem \ref{thm:boundaryParam1} and Lemma \ref{lem:changeOfCoords} we already have that $\Psi$ restricted to the interior of $\Delta_3$ is injective. Furthermore, the image of this restricted map is the interior of the codomain. Thus, this restriction of $\Psi$ is bijective. Now we must see where the boundaries of the domain get mapped. To see this let us consider an edge of the domain where $t = (s,1-s,0)$. Then we get
        \[
                \Psi(t) =
                \begin{bmatrix}
                        1 & -1 & \frac{(2s-1)(s^2-s-1)}{s^2-s+1} \\
                        -1 & 1 & - \frac{(2s-1)(s^2-s-1)}{s^2-s+1} \\
                         \frac{(2s-1)(s^2-s-1)}{s^2-s+1} &  -\frac{(2s-1)(s^2-s-1)}{s^2-s+1} & 1
                \end{bmatrix}.
        \]
        Note that in this case $a_{12} + a_{13} + a_{23} = -1$. Furthermore, this expression
        \[
                \frac{(2s-1)(s^2-s-1)}{s^2-s+1} 
        \]
        bijectively maps the interval $[0,1]$ to the interval $[-1,1]$. So we have bijectively mapped one edge of the domain to one edge of the codomain. Now by symmetry we can argue that the others behave the same.
\end{proof}

\section{Projecting to Parametrize $\widehat{\mathcal{CF}}_3$}

Now that we have a nice parametrization for the set $\{A \in \partial\widehat{\mathcal{CF}}_3 : a_{12} + a_{13} + a_{23} \leq -1\}$, we can extend this set to recover all of $\widehat{\mathcal{CF}}_3$. As we described before we will do this by projecting the curved boundary from the point $(a_{12},a_{13},a_{23})=(-1,-1,-1)$. We can see a 2-dimensional version of this idea in Figure \ref{fig:ProjIdea}. This leads to the following lemma.

\begin{figure}[h!]
        \centering
        \includegraphics[scale=0.7]{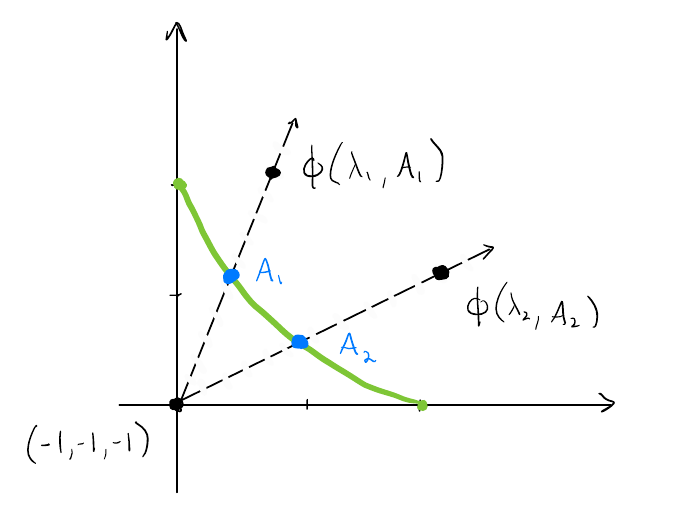}
        \caption{Projecting the boundary to the whole set}
        \label{fig:ProjIdea}
\end{figure}

\begin{lemma}
        The map $\phi: \mathbb{R}_{\geq 0} \times \left\{A \in \partial\widehat{\mathcal{CF}}_3 : a_{12} + a_{13}+a_{23} \leq -1\right\} \to \widehat{\mathcal{CF}}_3$ defined by
        \[
                \phi(\lambda,A) = A + \lambda\left(A -
                \begin{bmatrix}
                        1 & -1 & -1 \\
                        -1 & 1 & -1 \\
                        -1 & -1 & 1
                \end{bmatrix}\right) = 
                \begin{bmatrix}
                        1 & (1+\lambda)a_{12} + \lambda & (1+\lambda)a_{13} + \lambda \\
                        (1+\lambda)a_{12} + \lambda & 1 & (1+\lambda)a_{23} + \lambda \\
                        (1+\lambda)a_{13} + \lambda & (1+\lambda)a_{23} + \lambda & 1
                \end{bmatrix}
        \]
        is a bijection.
\end{lemma}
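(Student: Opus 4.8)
The plan is to read $\phi$ geometrically. I would identify each matrix in $\widehat{\mathcal{CF}}_3$ with its off-diagonal triple $(a_{12},a_{13},a_{23}) \in \mathbb{R}^3$ and write $v = (-1,-1,-1)$ for the center of projection. In these coordinates $\phi(\lambda,A) = v + (1+\lambda)(A-v)$, so $\phi(\lambda,A)$ is the point of the ray emanating from $v$ through $A$ at parameter $1+\lambda \geq 1$; morally, $\phi$ sweeps out each such ray from its curved-boundary footpoint $A$ (at $\lambda = 0$) off to infinity. First I would verify well-definedness. Writing $\phi(\lambda,A) = A + \lambda(A-v)$, the increment $\lambda(A-v)$ is symmetric with zero diagonal and off-diagonal entries $\lambda(a_{ij}+1) \geq 0$ (nonnegative by Lemma~\ref{lem:bound_on_nondiagonal}); since $x^\intercal(A-v)x = 2\lambda\sum_{i<j}(a_{ij}+1)x_ix_j \geq 0$ for $x \geq 0$, adding it preserves copositivity, so $\phi(\lambda,A) \in \widehat{\mathcal{CF}}_3$.

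The engine for both injectivity and surjectivity is that copositivity is monotone along rays from $v$ in the relevant direction. For any $B \in \widehat{\mathcal{CF}}_3$ the off-diagonal entries of $B$ are at least those of $v$ (again Lemma~\ref{lem:bound_on_nondiagonal}), so along $\sigma \mapsto v + \sigma(B-v)$ the off-diagonal entries are nondecreasing; hence by the same ``add a nonnegative off-diagonal matrix'' fact, $\{\sigma \geq 0 : v + \sigma(B-v) \in \widehat{\mathcal{CF}}_3\}$ is upward-closed. Since $\widehat{\mathcal{CF}}_3$ is closed and omits $v$ (the all-$(-1)$ off-diagonal matrix is not copositive, as $x=(1,1,1)$ gives $-3$), this set equals $[\sigma_0,\infty)$ for a unique $\sigma_0 \in (0,1]$, and the footpoint $A_B := v + \sigma_0(B-v)$ is a boundary point.

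The crux, and the step I expect to be the main obstacle, is proving $A_B$ lies in the curved piece $S := \{A \in \partial\widehat{\mathcal{CF}}_3 : a_{12}+a_{13}+a_{23} \leq -1\}$, i.e.\ that the ray never enters $\widehat{\mathcal{CF}}_3$ through a flat face. I would first record a boundary dichotomy: a boundary point with all $a_{ij} > -1$ must have a strictly positive null vector, since by Lemma~\ref{lem:curvedBoundary} a vanishing vector with a zero coordinate forces some $a_{ij} = -1$, and Lemmas~\ref{lem:singular_coefficients} and~\ref{lem:coefficients_cone} then give $a_{12}+a_{13}+a_{23} \leq -1$; equivalently, any boundary point with off-diagonal sum strictly above $-1$ has some $a_{ij} = -1$. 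Now I split on whether $B$ meets a flat face. If all $b_{ij} > -1$, then $a_{ij}(\sigma_0) = -1 + \sigma_0(b_{ij}+1) > -1$, so $A_B$ is off every flat face and the dichotomy forces $A_B \in S$. If instead some $b_{ij} = -1$, say $b_{12} = -1$, the whole ray stays in the plane $a_{12} = -1$; there one checks copositivity is equivalent to $a_{13}+a_{23} \geq 0$ (taking $x_1 = x_2 \to \infty$ gives ``$\geq$'', and a short case analysis in $x$ gives the converse), while the bounds $a_{13},a_{23} \geq -1$ already hold all along the ray, so the threshold is crossed exactly at $a_{13}+a_{23}=0$, that is at sum $=-1$, again landing in $S$.

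With this footpoint lemma in hand, surjectivity is immediate: given $B$, set $A := A_B \in S$ and $\lambda := \sigma_0^{-1} - 1 \geq 0$, so that $\phi(\lambda,A) = v + (1+\lambda)\sigma_0(B-v) = B$. For injectivity, if $\phi(\lambda,A) = \phi(\lambda',A') = P$ then $P \neq v$ (because $1+\lambda \geq 1$ and $A \neq v$), so $A$ and $A'$ both lie on the ray from $v$ through $P$; since the off-diagonal sum equals $-3 + \sigma(s+3)$ along that ray (with $s$ the off-diagonal sum of $B$) and is therefore strictly monotone, the footpoint analysis shows each such ray meets $S$ in exactly one point, giving $A = A'$ and then $1+\lambda = 1+\lambda'$ from $A \neq v$. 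I expect the flat-face characterization and the one-point-of-$S$-per-ray uniqueness to be the only genuinely delicate points; the remainder is the convex-geometry bookkeeping of a projection from an exterior apex.
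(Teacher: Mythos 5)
Your proof follows the same projection--from--the--apex idea as the paper, but where the paper's own proof is a two--sentence assertion that rays from $(-1,-1,-1)$ behave as pictured, you actually supply the substance: copositivity is monotone along rays from $v$ (adding a nonnegative zero--diagonal matrix preserves it), closedness of $\widehat{\mathcal{CF}}_3$ together with $v \notin \widehat{\mathcal{CF}}_3$ gives a well--defined footpoint $\sigma_0 \in (0,1]$, and --- the real content the paper omits --- the footpoint lands in the curved piece $S$ rather than on a flat face. Your boundary dichotomy (extracted from the internal computations of Lemma~\ref{lem:curvedBoundary}) and the explicit analysis on the slice $a_{12}=-1$ are both correct; note only that the equivalence ``copositive iff $a_{13}+a_{23}\ge 0$'' on that slice genuinely requires the side condition $a_{13},a_{23}\ge -1$ (e.g.\ $a_{13}=10$, $a_{23}=-10$ fails), which you correctly observe holds along the ray.

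The one genuine gap is in the injectivity step, where you assert that each ray from $v$ meets $S$ in exactly one point because the off--diagonal sum $-3+\sigma(s+3)$ is strictly increasing. That monotonicity only excludes a second intersection when the footpoint has sum exactly $-1$ (your flat--face case); when the footpoint $A_B$ has all $a_{ij}>-1$ and sum strictly below $-1$, points slightly farther along the ray still have sum below $-1$, so monotonicity of the sum does not keep them out of $S$. What does keep them out is that they are interior points of $\widehat{\mathcal{CF}}_3$: if $A'=A_B+\mu(A_B-v)$ with $\mu>0$ and some $y\ge 0$, $y\ne 0$ satisfies $y^\intercal A' y=0$, then the two nonnegative summands $y^\intercal A_B y$ and $2\mu\sum_{i<j}(a_{ij}+1)y_i y_j$ must both vanish; since every $a_{ij}+1>0$ in this case, the second forces $y$ to be a multiple of a standard basis vector $e_k$, and then $y^\intercal A_B y=y_k^2\ne 0$, a contradiction. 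Hence no point of the ray beyond the footpoint lies on $\partial\widehat{\mathcal{CF}}_3$, so none lies in $S$, and uniqueness follows. With that patch your argument is complete and is considerably more rigorous than the proof given in the paper.
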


\begin{proof}
        Note that $\phi$ is injective since there is no way for two distinct rays from $(a_{12},a_{13},a_{23})=(-1,-1,-1)$ to intersect $\left\{A \in \partial\widehat{\mathcal{CF}}_3 : a_{12} + a_{13}+a_{23} \leq -1\right\}$ in two different points and eventually meet up. Also, $\phi$ is surjective since every point in $\widehat{\mathcal{CF}}_3$ can be traced back to a point in $\left\{A \in \partial\widehat{\mathcal{CF}}_3 : a_{12} + a_{13}+a_{23} \leq -1\right\}$ when following a line back towards $(-1,-1,-1)$.
\end{proof}


\section{Scaling to Parametrize $\mathcal{CF}_3$}

Now that we have a good parametrization for $\widehat{\mathcal{CF}}_3$, we can scale the diagonal to obtain all of $\mathcal{CF}_3$. In other words we can extend our chosen slice of $\mathcal{CF}_3$ to \textit{almost} the whole set with the following method. Let $A \in \widehat{\mathcal{CF}}_3$, then we can scale $A$ to have whatever diagonal we wish via the map
\[
\left(
\begin{bmatrix}
        s_1 \\ s_2 \\ s_3
\end{bmatrix},
\begin{bmatrix}
        1 & a_{12} & a_{13} \\
        a_{12} & 1 & a_{23} \\
        a_{13} & a_{23} & 1
\end{bmatrix}
\right) \mapsto
\begin{bmatrix}
        s_1^2 & s_1s_2a_{12} & s_1s_3a_{13} \\
        s_1s_2a_{12} & s_2^2 & s_2s_3a_{23} \\
        s_1s_3a_{13} & s_2s_3a_{23} & s_3^2 
\end{bmatrix},
\]
where $s \in \mathbb{R}^3_{\geq 0}$. This will work for all but a measure zero set in $\mathcal{CF}_3$. So if we wanted to we could stop here. However, we will try to dive a little deeper to examine the problem and see if we can fix it. 

Just like some of our other issues while developing this parametrization, we run into difficulties when zeros occur, but this time we are concerned with zeros in the diagonal. To highlight the problem let's look at the matrices
\[
        A = \begin{bmatrix}
                0 & 1 & 1 \\
                1 & 1 & 1 \\
                1 & 1 & 1
        \end{bmatrix}
        \hspace{2em}
        \text{and}
        \hspace{2em}
        B = \begin{bmatrix}
                0 & 0 & 0 \\
                0 & 1 & 1 \\
                0 & 1 & 1
        \end{bmatrix}.
\]
Clearly both $A$ and $B$ are copositive, but with the current method of scaling the diagonal we can only obtain $B$. This is happening because when we scale a diagonal element, we have to scale an entire row and column as well. This is an issue.

On the road to fixing this problem let's try to reverse the order of scaling and extending. In other words, right now we are parametrizing a portion of the boundary of $\widehat{\mathcal{CF}}_3$, extending that to all of $\widehat{\mathcal{CF}}_3$, and then scaling the diagonal to get $\mathcal{CF}_3.$ Instead let's try to take the curved portion of the boundary of $\widehat{\mathcal{CF}}_3$, scale the diagonal, and then extend the set to get $\mathcal{CF}_3$. Algebraically nothing changes when we do this, we still end up with the map $\Phi:\mathbb{R}^4_{\geq 0} \times \left\{A \in \partial\widehat{\mathcal{CF}}_3: a_{12}+a_{13}+a_{23} \leq -1 \right\} \to \mathcal{CF}_3$
\[
        (s_1,s_2,s_3,\lambda,A) \mapsto
        \begin{bmatrix}
                s_1^2 & s_1s_2(1+\lambda)a_{12} + s_1s_2\lambda & s_1s_3(1+\lambda)a_{13} + s_1s_3\lambda \\
                s_1s_2(1+\lambda)a_{12} + s_1s_2\lambda & s_2^2 & s_2s_3(1+\lambda)a_{23} + s_2s_3\lambda \\
                s_1s_3(1+\lambda)a_{13} + s_1s_3\lambda & s_2s_3(1+\lambda)a_{23} + s_2s_3\lambda & s_3^2
        \end{bmatrix}.
\]
However, thinking about the process in this order will help us fix some of the issues. Namely, the issue that this map is not surjective.

\begin{figure}[h]
        \centering
        \begin{tabular}{lll}
                \includegraphics[width=.3\linewidth]{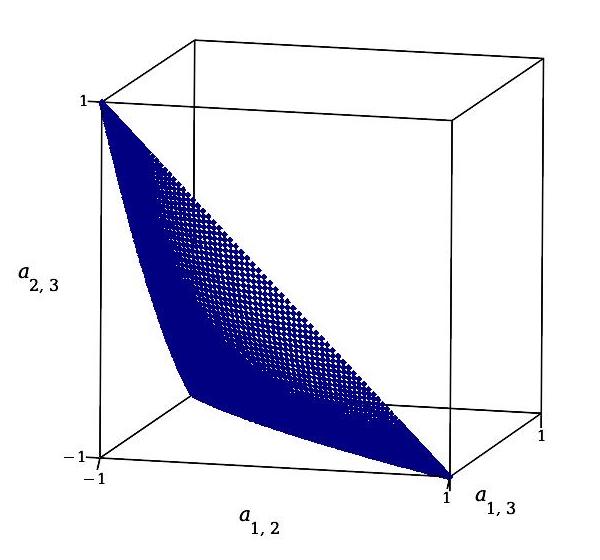} & \includegraphics[width=.3\linewidth]{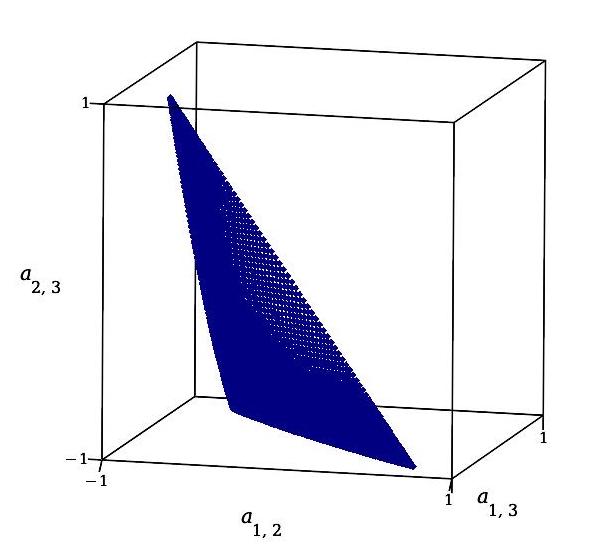} & \includegraphics[width=.3\linewidth]{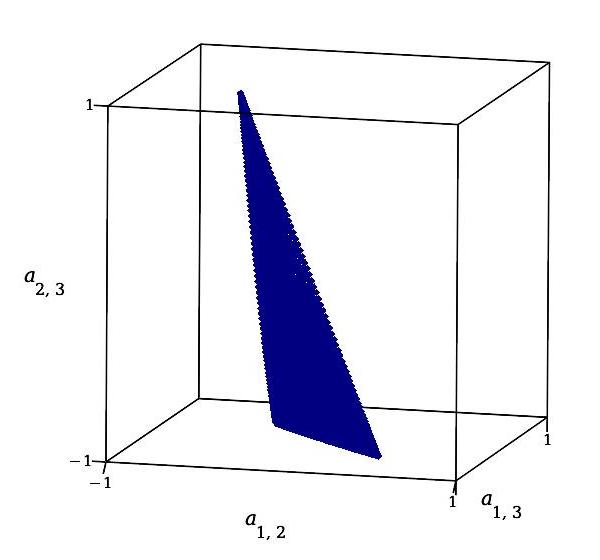}\\
                \includegraphics[width=.3\linewidth]{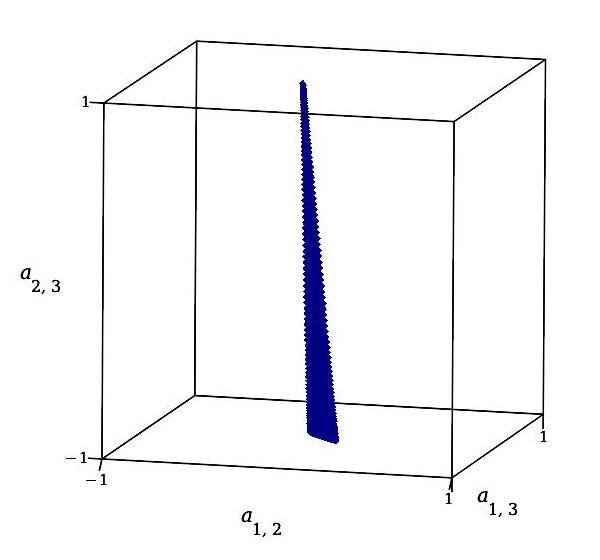} & \includegraphics[width=.3\linewidth]{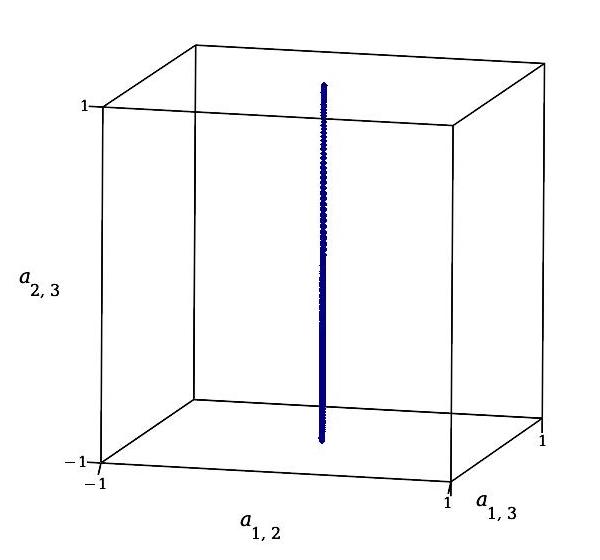} &
        \end{tabular}
        \caption{The off-diagonal elements as a diagonal element changes from 1 to 0}
        \label{fig:ChangingBoundary}
\end{figure}

To try to visualize the problem, we will set $s_2=s_3=1$, but we will allow $s_1$ to decrease from 1 to 0. See Figure \ref{fig:ChangingBoundary}. There are two big issues here. One is that when $s[1]=0$ this boundary collapses into a 1-dimensional object, and so we lose injectivity. Unfortunately, we can't do much about this issue. The second problem is that the point we need to project from, $(-s_1s_2,-s_1s_3,-s_2s_3)$, lies directly under this vertical line when $s_1 = 0$. So projecting will only give us a vertical line, when the copositive set here is everything in a non-negative direction from this line. This issue we can address, and we will do so by projecting a ray \emph{towards} a non-negative direction instead of \emph{from} a point.

To achieve this we will take the curved simplex that we already have which has vertices at $(s_1s_2,-s_1s_3,-s_2s_3)$, $(-s_1s_2,s_1s_3,-s_2s_3)$, and $(-s_1s_2,-s_1s_3,s_2s_3)$, and we will project towards one with vertices at $(s_1s_2+1,-s_1s_3,-s_2s_3)$, $(-s_1s_2,s_1s_3+1,-s_2s_3)$, $(-s_1s_2,-s_1s_3,s_2s_3+1)$. We can do this without introducing any new parameters since this surface we are projecting towards solely depends on the boundary that we've already parametrized. This surface is given by
\begin{align*}
        a_{12} &= (2s_1s_2+1)\frac{t_3^2(1+t_1)(1+t_2)}{(1-t_1t_3)(1-t_2t_3)} - s_1s_2\\
        a_{13} &= (2s_1s_3+1)\frac{t_2^2(1+t_1)(1+t_3)}{(1-t_1t_2)(1-t_2t_3)} - s_1s_3 \\
        a_{23} &= (2s_2s_3+1)\frac{t_1^2(1+t_2)(1+t_3)}{(1-t_1t_2)(1-t_1t_3)} - s_2s_3.
\end{align*}

Projecting towards this surface gives us our final version of the parametrization: $\Phi:\mathbb{R}^4_{\geq 0} \times \Delta_3 \to \mathcal{CF}_3$, where
$(s_1,s_2,s_3,\lambda,t_1,t_2,t_3)$ gets mapped to the matrix
\[
        \begin{bmatrix}
                s_1^2 & (2s_1s_2+\lambda)\frac{t_3^2(1+t_1)(1+t_2)}{(1-t_1t_3)(1-t_2t_3)} - s_1s_2
                & (2s_1s_3+\lambda)\frac{t_2^2(1+t_1)(1+t_3)}{(1-t_1t_2)(1-t_2t_3)} - s_1s_3 \\
                (2s_1s_2+\lambda)\frac{t_3^2(1+t_1)(1+t_2)}{(1-t_1t_3)(1-t_2t_3)} - s_1s_2 & s_2^2 &
                (2s_2s_3+\lambda)\frac{t_1^2(1+t_2)(1+t_3)}{(1-t_1t_2)(1-t_1t_3)} - s_2s_3 \\
                (2s_1s_3+\lambda)\frac{t_2^2(1+t_1)(1+t_3)}{(1-t_1t_2)(1-t_2t_3)} - s_1s_3 & 
                (2s_2s_3+\lambda)\frac{t_1^2(1+t_2)(1+t_3)}{(1-t_1t_2)(1-t_1t_3)} - s_2s_3 & s_3^2
        \end{bmatrix}.
\]

\begin{theorem}
        This map $\Phi: \mathbb{R}^4_{\geq 0} \times \Delta_3 \to \mathcal{CF}_3$ is surjective and almost injective.
\end{theorem}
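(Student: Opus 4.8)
The plan is to prove surjectivity and almost injectivity separately, leveraging the chain of bijections already established for the restricted maps and then analyzing how the final ``projection toward a surface'' modification repairs the surjectivity defect identified in the text.

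For surjectivity, I would first reduce to the diagonal-nonzero case via the scaling argument already sketched: if $A \in \mathcal{CF}_3$ has all diagonal entries positive, set $s_i = \sqrt{a_{ii}}$, factor out the diagonal to land in $\widehat{\mathcal{CF}}_3$, and then invoke the composite bijection $\Psi$ (Theorem \ref{thm:BoundaryParam2}) together with the projection lemma to recover parameters $(\lambda, t_1, t_2, t_3)$. Since $\lambda = 0$ corresponds exactly to the original curved boundary and the new target surface agrees with the old projection when the relevant $s_i s_j$ products are accounted for, the $\lambda \geq 0$ sweep covers all of $\widehat{\mathcal{CF}}_3$ after rescaling. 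The genuinely new work is the boundary-of-the-cone case, where some $s_i = 0$. Here I would verify directly that the modified formula, which projects \emph{toward} the shifted surface rather than \emph{from} the point $(-s_1 s_2, -s_1 s_3, -s_2 s_3)$, produces the correct off-diagonal entries: when $s_1 = 0$, say, one checks that as $\lambda \to \infty$ the off-diagonal entries $(2 s_1 s_3 + \lambda)(\cdots) - s_1 s_3$ and $(2 s_1 s_2 + \lambda)(\cdots) - s_1 s_2$ range over all nonnegative values while $s_1^2 = 0$ forces the first row's contribution to the quadratic form to behave correctly, so every copositive matrix with a zero on the diagonal is hit. This is the step that Figure \ref{fig:ChangingBoundary} is designed to justify, and it is where I expect the main obstacle to lie.

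For almost injectivity, the idea is that $\Psi: \Delta_3 \to \{A \in \partial\widehat{\mathcal{CF}}_3 : a_{12}+a_{13}+a_{23} \leq -1\}$ is a genuine bijection, and the projection map and scaling map are bijective on the open locus where all $s_i > 0$, so the only failures of injectivity occur on the measure-zero boundary set $\{s_1 s_2 s_3 = 0\}$. I would argue that on the open region $s_i > 0$, $\lambda > 0$, $t \in \operatorname{int}(\Delta_3)$ the map is injective by composing the injectivity of the three stages: given a target matrix with positive diagonal, the $s_i$ are determined as $\sqrt{a_{ii}}$, then dividing out the diagonal the projection direction is fixed and $\lambda$ is recovered by how far along the ray the point sits, and finally $t$ is recovered by the bijectivity of $\Psi$. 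The boundary cases, as the text concedes when noting ``this boundary collapses into a 1-dimensional object, and so we lose injectivity,'' are exactly the measure-zero set where injectivity genuinely breaks down, which is acceptable under the ``almost'' qualifier.

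The main obstacle is the surjectivity verification on the diagonal-zero boundary: one must confirm that the re-engineered projection-toward-a-surface construction actually sweeps out the full copositive set in those degenerate configurations rather than just a lower-dimensional slice, and that no copositive matrix with a zero diagonal is missed. I would handle this by taking an arbitrary $A \in \mathcal{CF}_3$ with, say, $a_{11} = 0$, noting that copositivity then forces $a_{12}, a_{13} \geq 0$ (by evaluating the quadratic form on $e_1 + \varepsilon e_j$ and letting $\varepsilon \to 0$), setting $s_1 = 0$ and $s_2, s_3$ from the remaining diagonal, and then solving explicitly for $\lambda$ and $t$ so that the two $s_1$-involving off-diagonal entries match $a_{12}$ and $a_{13}$ while the $a_{23}$ entry is governed by the already-bijective edge map from Theorem \ref{thm:BoundaryParam2}. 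Confirming that these three matching conditions are simultaneously solvable with $\lambda \geq 0$ and $t \in \Delta_3$ is the crux; I expect it to follow from the same range-of-values computation $\frac{(2s-1)(s^2-s-1)}{s^2-s+1}$ that appeared in the proof of Theorem \ref{thm:BoundaryParam2}, now multiplied by the appropriate $(2 s_i s_j + \lambda)$ factors, but making this fully rigorous across all the degenerate strata is the part requiring genuine care.
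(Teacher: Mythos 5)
Your proposal follows essentially the same route as the paper: determine the $s_i$ from the diagonal, reduce the positive-diagonal case to the bijection of Theorem \ref{thm:BoundaryParam2} via the $\lambda$-layers of the projection, and treat the strata where some $s_i = 0$ as the measure-zero locus where injectivity fails but surjectivity must still be checked directly. If anything you are more candid than the paper about the one delicate point: the paper likewise asserts, without a fully detailed verification, that when $s_1 = 0$ the three off-diagonal targets can be hit simultaneously by a single choice of $(\lambda, t)$, treating the $(1,2)$ and $(1,3)$ entries as freely chosen non-negative values while the $(2,3)$ entry is absorbed into the $\mathcal{CF}_2$ description of the bottom-right block.
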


\begin{proof}
        Let $A \in \mathcal{CF}_3$. The diagonal of $A$ uniquely determines $s_1$, $s_2$, and $s_3$. Let's first consider the case where all $s_i > 0$. In this case the problem simplifies to considering the map $\widehat{\Phi}:\mathbb{R}_{\geq 0} \times \Delta_3 \to \widehat{\mathcal{CF}}_3$, where $(\lambda,t_1,t_2,t_3)$ maps to
        \[
                \begin{bmatrix}
                        1 & (2+\frac{\lambda}{s_1s_2})\frac{t_3^2(1+t_1)(1+t_2)}{(1-t_1t_3)(1-t_2t_3)} - 1
                        & (2+\frac{\lambda}{s_1s_3})\frac{t_2^2(1+t_1)(1+t_3)}{(1-t_1t_2)(1-t_2t_3)} - 1 \\
                        (2+\frac{\lambda}{s_1s_2})\frac{t_3^2(1+t_1)(1+t_2)}{(1-t_1t_3)(1-t_2t_3)} - 1 & 1 &
                        (2+\frac{\lambda}{s_2s_3})\frac{t_1^2(1+t_2)(1+t_3)}{(1-t_1t_2)(1-t_1t_3)} - 1 \\
                        (2+\frac{\lambda}{s_1s_3})\frac{t_2^2(1+t_1)(1+t_3)}{(1-t_1t_2)(1-t_2t_3)} - 1 & 
                        (2 +\frac{\lambda}{s_2s_3})\frac{t_1^2(1+t_2)(1+t_3)}{(1-t_1t_2)(1-t_1t_3)} - 1 & 1
                \end{bmatrix}.
        \]
        Note that $\lambda$ separates the image of $\widehat{\Phi}$ into distinct layers. Thus, $\lambda$ is uniquely determined by $A$ as well. For each $\lambda$ we have a map which is essentially the same as the bijective map in Theorem \ref{thm:BoundaryParam2}. The only difference is that it is a curved simplex with corner points at $(1+\frac{\lambda}{s_1s_1},-1,-1)$, $(-1,1+\frac{\lambda}{s_1s_3},-1)$, and $(-1,-1,1+\frac{\lambda}{s_2s_3})$ instead of at $(1,-1,-1)$, $(-1,1,-1)$, and $(-1,-1,1)$. Hence, each layer is a bijection, and we have achieved surjectivity. Also, this means that our original map $\Phi$ is injective as long as we avoid the measure-zero set of $s_1 = 0$, $s_2=0$, or $s_3=0$.
        
        Now we will consider the case when $s_1 = 0$. This leads to $A$ having the form
        \[
        \begin{bmatrix}
                0 & \lambda\frac{t_3^2(1+t_1)(1+t_2)}{(1-t_1t_3)(1-t_2t_3)}
                & \lambda\frac{t_2^2(1+t_1)(1+t_3)}{(1-t_1t_2)(1-t_2t_3)} \\
                \lambda\frac{t_3^2(1+t_1)(1+t_2)}{(1-t_1t_3)(1-t_2t_3)} & s_2^2 &
                (2s_2s_3+\lambda)\frac{t_1^2(1+t_2)(1+t_3)}{(1-t_1t_2)(1-t_1t_3)} - s_2s_3 \\
                \lambda\frac{t_2^2(1+t_1)(1+t_3)}{(1-t_1t_2)(1-t_2t_3)} & 
                (2s_2s_3+\lambda)\frac{t_1^2(1+t_2)(1+t_3)}{(1-t_1t_2)(1-t_1t_3)} - s_2s_3 & s_3^2
        \end{bmatrix}.
        \]
        Here we lose injectivity. Since $\lambda = 0$ causes the first row and columns to be zero regardless of what $(t_1,t_2,t_3)$ is. However, this map still surjects onto matrices in $\mathcal{CF}_3$ with their top-right entry being 0. Notice that the $2 \times 2$ bottom-right corner is
        \[
                \begin{bmatrix}
                        s_2^2 &
                        (2s_2s_3+\lambda)\frac{t_1^2(1+t_2)(1+t_3)}{(1-t_1t_2)(1-t_1t_3)} - s_2s_3 \\
                        (2s_2s_3+\lambda)\frac{t_1^2(1+t_2)(1+t_3)}{(1-t_1t_2)(1-t_1t_3)} - s_2s_3 & s_3^2
                \end{bmatrix} = 
                \begin{bmatrix}
                        s_2^2 & - s_2s_3 + \gamma \\
                        -s_2s_3 +\gamma & s_3^2
                \end{bmatrix},
        \]
        where $\gamma \in [0,\infty)$. This matches our description of matrices in $\mathcal{CF}_2$. Now for the $(1,2)$ and $(1,3)$ entries, they can be any non-negative numbers. And note, that's exactly what we have here. We can think of $t$ as choosing a non-negative direction, and $\lambda$ extends that direction to achieve any non-negative numbers we wish. By symmetry we can extend this argument, and we have surjectivity in this case.
\end{proof}


\section{Conclusion}

After all of this work we arrived at a parametrization of $\mathcal{CF}_3$ which is quite nice in a lot of ways. Not only is it almost injective, but we can see a lot of the symmetries that exist within $\mathcal{CF}_3$. Of course the goal is to keep going, and to parametrize $\mathcal{CF}_4$ and so on. But we very quickly run into a problem. If we were to try to directly apply the same method to the $n=4$ case, we would get stuck. There is no equivalent result for Lemma \ref{lem:singular_coefficients}. In fact, we were only able to get Lemma \ref{lem:singular_coefficients} because the number of non-diagonal elements, $\binom{n}{2}$, happens to match to the size of the matrix when $n = 3$. This brings us to the following question.
\begin{question}
        For $n > 3$ how do we parametrize the set
        \[
                \left\{A \in \widehat{\mathcal{CF}}_n :  \underset{p\geq 0}{\exists} Ap = 0 \right\}
        \]
        in an injective or almost injective way?
\end{question}

We believe answering this question will make great progress towards the parametrization of all copositive matrices. Another possible direction to go is consider higher degree homogeneous polynomials and try to parametrize the more general \emph{copositive forms}. These are being considered in physics applications \cite{chen2018copositive}. Regardless, there we believe that this is a rich area of study which still has many unknowns. With this paper our goal was to start the ball rolling on parametrizing copositive forms with the smallest non-trivial case, and hopefully we developed ideas which can be utilized for further parametrization.


\bigskip \noindent \textbf{Acknowledgements.}
Hoon Hong was partially supported by US National Science Foundation NSF-CCF-2212461.

\bibliographystyle{plain}
\bibliography{biblio}


\end{document}